\def\d{\partial}
\theoremstyle{definition}
\newtheorem{mdef}{Definition}[section]
\newtheorem{mnot}{Notation}[section]
\newtheorem{mrem}{Remark}[section]
\theoremstyle{plain}
\newtheorem{mth}{Theorem}[section]
\newtheorem{mst}{Statement}[section]
\newtheorem{mcor}{Corollary}[section]
\newtheorem{mlm}{Lemma}[section]
\newtheorem{mprop}{Proposition}[section]
\author{Mingazov A. A.}
\title{Some remarks on relative framed motives}
\begin{document}
\maketitle

\section{Introduction}
The purpose of this paper is to prove two statements which are key in the construction of mnotivic fibrant replacement for suspension spectrum conserned with motivic spece $\dfrac{X}{X-Z}$,  where $X$ is smooth variety and $Z$ is its smooth subvariety. In fact this generalize the main result of the paper \cite{GNP}.The main results a formulated in theorems \ref{iso_geom_factor} and \ref{cancelcation}.

\section{Basic notions}
\begin{mdef}
Let $Y$ be a $k$-smooth scheme, $S\subset Y$ be a closed subset and $U\in Sm/k$. A framed correspondence of level $m$ from $U$ to  $Y/(Y-S)$ is the set of data
$$(Z,W,\phi_1,\ldots,\phi_m; g\colon W\rightarrow Y),$$
where $Z\subset U\times\mathbb{A}^m$ is closed and finite over $U$, $W$ is etale neighborhood of $Z$ in $U\times \mathbb{A}^m$, $\phi_1,\ldots, \phi_m$ are regular functions on $W$, $g$ is a regular map such as $Z=g^{-1}(S)\cap V(\phi_1,\ldots, \phi_m)$.

Two correspondence are equivalent if they coiunside on the common etale neighborhood.
\end{mdef}

\begin{mnot}

1) $Fr_m(U,Y/(Y-S))$ is the set of framed correspondencefrom $U$ to $Y/(Y-S)$ up to equivalence.

2) $F_m(U,Y/(Y-S))$ is is the set of framed correspondencefrom $U$ to $Y/(Y-S)$ up to equivalence with connected support $Z$.

3) $Fr_*(U,Y/(Y-S))=\mathop\bigsqcup\limits_{m\geq 0}Fr_m(U,Y/(Y-S))$.

4) $Fr(U,Y/(Y-S))$ is the set stabilized up to the map \break $\Sigma\colon \Phi\mapsto \Phi\boxtimes \sigma$, where $\sigma=(\{0\},\mathbb{A}^1, id\colon \mathbb{A}^1\rightarrow\mathbb{A}^1, const\colon \mathbb{A}^1\rightarrow pt)\in Fr_1(pt,pt)$.

5) $\mathbb{Z}Fr_m(U,Y/(Y-S))$ is the free abelian group generated by $Fr_m$.

6) $\mathbb{Z}F_*(U,Y/(Y-S))$ is the factor of $\mathbb{Z}Fr_m(U,Y/(Y-S))$ up to the relation:
$$(Z\sqcup Z',W,(\phi_1,\ldots,\phi_m); g\colon W\rightarrow Y)=$$
$$=(Z,W\setminus Z',(\phi_1,\ldots,\phi_m)|_{W\setminus Z'}; g|_{W\setminus Z'})+(Z,W\setminus Z,(\phi_1,\ldots,\phi_m)|_{W\setminus Z}; g|_{W\setminus Z})$$

Note that $F_m(U,Y/(Y-S))$ is a basis of the group $\mathbb{Z}F_m(U,Y/(Y-S))$.
\end{mnot}

\begin{mdef}
Presheaf $\mathcal{F}$ with $\mathbb{Z}F_*$-transfers is a functor $\mathcal{F}\colon \mathbb{Z}F_*^{op}\r Ab$. The category of Nisnevich shaeves with  $\mathbb{Z}F_*$-transfers wil be denoted as $NSZF_*$.
\end{mdef}

\begin{mlm}
$\mathbb{Z}F_*(-,X)$ is a Nisnevich sheaf.
\end{mlm}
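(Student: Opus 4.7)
The plan is to verify the Nisnevich sheaf property on an elementary distinguished square
$$\xymatrix{U'\ar[r]\ar[d] & X'\ar[d]^p \\ U\ar[r]^j & X}$$
with $p$ étale, $j$ an open immersion, and $p$ an isomorphism over $X\setminus U$; equivalently, to check exactness of
$$0\to \mathbb{Z}F_m(X,Y/(Y-S))\to \mathbb{Z}F_m(X',Y/(Y-S))\oplus \mathbb{Z}F_m(U,Y/(Y-S))\to \mathbb{Z}F_m(U',Y/(Y-S))$$
for each level $m$.

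First I would deal with the set-valued presheaf $Fr_m(-,Y/(Y-S))$: each piece of its defining data---the closed finite $Z$, the étale neighborhood $W$, the regular functions $\phi_j$, and the regular map $g$---descends along étale covers, and the equivalence relation ``coincide on a common étale neighborhood'' is stable under pullback. Hence $Fr_m(-,Y/(Y-S))$ is a Nisnevich sheaf of sets and $\mathbb{Z}Fr_m(-,Y/(Y-S))$ is a Nisnevich sheaf of abelian groups. To pass to the quotient $\mathbb{Z}F_m$, I would invoke the fact flagged at the end of Notation~2.1 that $F_m$ is a basis of $\mathbb{Z}F_m$: every element is a unique finite formal sum of connected-support correspondences, and pullback along an étale map $V'\to V$ is computed on the basis by taking the base change $Z\times_V V'$ of the support and splitting it into its connected components via the additivity relation.

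With this explicit description, injectivity of the left map follows by a closed-point argument: a nonzero basis combination on $X$ has some connected support $Z$ with a closed point $z$, and the image of $z$ in $X$ lifts by the Nisnevich-square property uniquely either to $X'$ or to $U$, so the corresponding component appears with a nonzero coefficient in the restriction to that factor. For surjectivity I would glue: matching sums $\beta$ on $X'$ and $\gamma$ on $U$ have supports that assemble by Nisnevich descent for closed subschemes into a finite closed $Z\subset X\times\mathbb{A}^m$, and the remaining data $(W,\phi_j,g)$ glue by the sheaf property of $\mathbb{Z}Fr_m$ established above. The main technical obstacle is this last step: two framed correspondences with the same $Z$ but different étale neighborhoods are identified only after a further common refinement, so one must choose a shared refinement of the neighborhoods over $U'\times\mathbb{A}^m$ before gluing---this is where the Nisnevich (rather than merely Zariski) hypothesis is essential---after which the additivity relation absorbs any mismatch between the decompositions of $Z$ into connected components on $X'$ and on $U$ separately.
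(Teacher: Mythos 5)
The paper states this lemma with no proof at all, so there is nothing on its side to compare against; your proposal has to stand on its own, and it contains one genuine error. The claim that $\mathbb{Z}Fr_m(-,Y/(Y-S))$ is a Nisnevich sheaf of abelian groups is false: the free abelian group on a sheaf of sets is essentially never a sheaf. Already for the elementary square with $X=X_1\sqcup X_2$, $U=X_1$, $X'=X_2$, $U'=\emptyset$ one has $Fr_m(X_1\sqcup X_2,\,\cdot\,)\cong Fr_m(X_1,\,\cdot\,)\times Fr_m(X_2,\,\cdot\,)$, and $\mathbb{Z}[A\times B]\neq\mathbb{Z}[A]\oplus\mathbb{Z}[B]$, so $\mathbb{Z}Fr_m$ fails even additivity; more generally, linearizing an injection of sets $S(X)\hookrightarrow S(X')\times S(U)$ does not give an injection $\mathbb{Z}[S(X)]\to\mathbb{Z}[S(X')]\oplus\mathbb{Z}[S(U)]$, since elements of the form $[(b_1,c_1)]+[(b_2,c_2)]-[(b_1,c_2)]-[(b_2,c_1)]$ map to zero. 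This failure is exactly the reason the quotient $\mathbb{Z}F_m$ by the additivity relation is introduced in Notation 2.1(6); the lemma is genuinely about $\mathbb{Z}F_m$ and cannot be routed through a sheaf property of $\mathbb{Z}Fr_m$.

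The error is localized, though: in your gluing step you only actually need the sheaf-of-sets property of $Fr_m$ (to descend the data $(W,\phi_j,g)$ over an already-descended support $Z$) together with the fact that connected-support correspondences form a basis of $\mathbb{Z}F_m$, and both of those inputs are sound. If you delete the sentence about $\mathbb{Z}Fr_m$ and argue directly on the basis $F_m$ --- restriction of a basis element is the sum of the connected components of the pulled-back support; injectivity by tracking which components survive in $X'\sqcup U$; surjectivity by descent of the finite closed support (finiteness is fppf-local on the base) plus descent of the remaining data --- the architecture is correct and is essentially the standard argument. Two further points deserve more than a wave: the assertion that each datum of a framed correspondence ``descends along \'etale covers'' is precisely the content of Voevodsky's lemma identifying $Fr_m(U,Y/(Y-S))$ with a Hom set of pointed Nisnevich sheaves out of $U_+\wedge (\mathbb{A}^1/(\mathbb{A}^1-0))^{\wedge m}$, and should be cited or proved rather than asserted; and in the surjectivity step the matching condition over $U'$ is an identity of integer combinations that can a priori involve cancellation, so one must argue that matching forces an honest component-by-component correspondence of supports before gluing them.
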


\section{Some statements on cohomologies of sheaves with $\mathbb{Z}F_*$-transfers}
The statements of this section are modifications of similar theorems of \cite{SV}.

\begin{mlm} Let $f\colon Y\rightarrow X$ be a Nisnevich covering of variety $X$. Then the sequence of the sheaves
$$0\leftarrow\mathbb{Z}F_n(X)\xleftarrow{f_*}\mathbb{Z}F_n(Y)\xleftarrow{(p_2)_*-(p_1)_*}\mathbb{Z}F_n(Y\times_X Y)\leftarrow\ldots$$
is exact.
\end{mlm}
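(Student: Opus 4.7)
The plan is to reduce the claim to exactness on Nisnevich stalks and then construct an explicit contracting homotopy that arises from the Nisnevich lifting property applied to Henselian local supports of framed correspondences.

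A sequence of Nisnevich sheaves is exact iff it is exact after evaluation at every essentially smooth Henselian local $U$, so it suffices to show that for each such $U$ the complex of abelian groups
$$\ldots \to \mathbb{Z}F_n(U, Y\times_X Y) \to \mathbb{Z}F_n(U, Y) \xrightarrow{f_*} \mathbb{Z}F_n(U, X) \to 0$$
is exact. The group $\mathbb{Z}F_n(U,V)$ is generated by basic correspondences $(Z, W, \phi_1, \ldots, \phi_n; g\colon W\to V)$ with $Z$ connected, and any such $Z$, being connected and finite over the Henselian local $U$, is itself Henselian local. For surjectivity of $f_*$ on such a generator, the composite $Z\hookrightarrow W\xrightarrow{g} X$ lifts through $f$: the base change $Z\times_X Y\to Z$ is a Nisnevich cover of a Henselian local scheme, hence admits a section $s\colon Z\to Y$; this section extends to a morphism $g'\colon W'\to Y$ on an étale neighborhood $W'\subseteq W\times_X Y$ of $Z$, and $(Z, W', \phi_i|_{W'}; g')$ maps to the original generator under $f_*$.

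For higher exactness I would define a contracting homotopy $h_k\colon \mathbb{Z}F_n(U, Y^{\times_X k}) \to \mathbb{Z}F_n(U, Y^{\times_X (k+1)})$ on a generator with $g\colon W\to Y^{\times_X k}$ by taking the composite to $X$ via the first projection, lifting it to $Z\to Y$ by the same Nisnevich argument and extending to a section $s'$ on some étale neighborhood $W''$ of $Z$, and then inserting $s'$ as a new first coordinate to produce a map $W''\to Y^{\times_X (k+1)}$. The standard simplicial identities $d_0\circ s' = \mathrm{id}$ and $d_i\circ s' = s'\circ d_{i-1}$ for $i\geq 1$ yield $dh+hd=\mathrm{id}$ on the truncated Čech complex, which proves exactness.

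The main obstacle, I expect, is not the lifting itself—which is a formal consequence of the Nisnevich property applied to the Hensel local $Z$—but the bookkeeping: verifying that $h$ descends to a well-defined map on the quotient $\mathbb{Z}F_n$, i.e.\ that it respects equivalence of correspondences under refinement of étale neighborhoods and the splitting relation on disjoint unions $Z\sqcup Z'$, and that the signs of the alternating differential $(p_2)_*-(p_1)_*$ line up correctly with the inserted-coordinate degeneracy.
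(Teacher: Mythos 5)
Your proposal follows essentially the same route as the paper: reduce to Nisnevich stalks at a Henselian local $U$, use that a connected support $Z$ finite over $U$ is itself Henselian local to section the pulled-back covering $Z\times_X Y\rightarrow Z$, and contract the \v{C}ech-type complex by inserting the lifted coordinate as an extra degeneracy. The one organizational point the paper makes explicit and you leave implicit is the decomposition of the stalkwise complex into a direct sum of subcomplexes $A^{\alpha}_*(U)$ indexed by the underlying correspondences $\alpha$ to $X$, which is precisely what allows a \emph{single} lift $g\colon W\rightarrow Y$ to be fixed once per $\alpha$ and reused for every generator of that subcomplex --- without this (i.e.\ if the lift were rechosen generator by generator) the identity $dh+hd=\mathrm{id}$ would fail, so you should state that your lift depends only on the composite to $X$ and on $(Z,W)$, not on the individual generator.
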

\begin{proof}
Let $U$ be a local Henselian ring. We need to prove the exactness of the complex
$$A_*(U)=\left(0\leftarrow\mathbb{Z}F_n(U,X)\xleftarrow{f_*}\mathbb{Z}F_n(U,Y)\xleftarrow{(p_2)_*-(p_1)_*}\mathbb{Z}F_n(U,Y\times_X Y)\leftarrow\ldots\right).$$

Fix framed correspondence $\alpha$
$$\xymatrix{&W\ar[dl]\ar[r]&X\times \mathbb{A}^n\\
U\times \mathbb{A}^n &Z\ar[r]\ar@{_(->}[l]\ar[d]\ar@{^(->}[u]&X\times 0,\ar@{^(->}[u],\\
&U&
}
$$
where scheme $Z$ is connected.

Let $A^\alpha_k(U)$ be a subgroup of $A_k(U)$ generated by correspondences $\beta$, which after composition with map$Y\times_X Y\times\ldots \times_X Y\rightarrow Y$ are divisible by $\alpha$. Note that $A^\alpha_*(U)$ is subcomplex in  $A_*(U)$ and $A_*=\bigoplus\limits_\alpha A^\alpha_*(U)$. In particular, the support of all elements of $A^\alpha_*(U)$ is $Z$. It is enough to prove that the complex $A^\alpha_*(U)$ is contractible. 

As $Z$ is connected and finite over local Henselian ring $U$, it is local and Henselian. Consider two Cartesian squares
$$\xymatrix{Z\times_X Y\ar[r]\ar[d]&W\times_X Y\ar[r]\ar[d]&Y\ar[d]^f\\
Z\ar[r]&W\ar[r]&X.}$$
The map $Z\times_XY\rightarrow Z$ is etale as the pullback of the etale map $f\colon Y\rightarrow X$, so it has a section $s\colon Z \rightarrow Z\times_X Y$ as $Z$ is Henselian. Let fix it. As $W$ is Henselian too, we will suppose thar there exist some underline diagramm $Z\hookrightarrow W\xrightarrow{g} Y$.

Now we construct a contracting homotopy $h_k^\alpha\colon A_k^\alpha(U)\rightarrow A_{k+1}^\alpha(U)$. Let us take $\beta\in A_k^\alpha(U)$
$$\xymatrix{&W\ar[dl]\ar[rr]^{q~~~~~~~~~} &&Y\times_X\ldots\times_X Y\times \mathbb{A}^n\\
U\times\mathbb{A}^n\ar[dr]&Z\ar[d]\ar@{^(->}[u]\ar@{_(->}[l]\ar[rr]&&Y\times_X\ldots\times_X Y\times 0.\ar@{^(->}[u]\\
&U&&}$$
Then $h(\beta)$ can be defined as
$$\xymatrix{&W\ar[dl]\ar[rr]^{q\times_X g~~~~~~~~~~~~~~} &&Y\times_X\ldots\times_X Y\times_X Y\times \mathbb{A}^n\\
U\times\mathbb{A}^n\ar[dr]&Z\ar[d]\ar@{^(->}[u]\ar@{_(->}[l]\ar[rr]&&Y\times_X\ldots\times_X Y\times_X Y\times 0.\ar@{^(->}[u]\\
&U&&}$$
The properties of contracting homotopy can be verified straightly.
\end{proof}

\begin{mst}
Let $X$ be a smooth variety. Then presheaves $H^i(C^*\mathbb{Z}F(X))$ are quasistable.
\end{mst}

\begin{mcor}
Let $I$ be an injective Nisnevich sheaf with $\mathbb{Z}F_*$-transfers. Then for any $X\in Sm/k$  $$H^i_{Nis}(X,I)=0.$$
\end{mcor}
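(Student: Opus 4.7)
The delicate point is that $I$ is injective only in the category $NSZF_*$, not in plain Nisnevich sheaves on $Sm/k$, so the vanishing is not a formal consequence of injectivity: one must exploit the transfer structure together with the preceding \v{C}ech exactness lemma. My plan is to compute $H^*_{Nis}(X,I)$ through \v{C}ech cohomology and then pass to derived functors.

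First I would fix $X\in Sm/k$ and a Nisnevich covering $f\colon Y\to X$. The Yoneda lemma inside $NSZF_*$ gives $I(V)=\operatorname{Hom}_{NSZF_*}(\mathbb{Z}F_*(-,V),I)$ for every $V\in Sm/k$. Applying the functor $\operatorname{Hom}_{NSZF_*}(-,I)$ to the exact sequence of sheaves with transfers
\[
0\leftarrow\mathbb{Z}F_*(-,X)\leftarrow\mathbb{Z}F_*(-,Y)\leftarrow\mathbb{Z}F_*(-,Y\times_X Y)\leftarrow\cdots
\]
provided by the previous lemma, which is exact because $I$ is injective, yields exactness of the augmented \v{C}ech complex
\[
0\to I(X)\to I(Y)\to I(Y\times_X Y)\to\cdots.
\]
Thus $\check{H}^i(Y/X,I)=0$ for every $i>0$ and every Nisnevich cover $Y\to X$, and running the same argument over every smooth base shows that $I$ has vanishing \v{C}ech cohomology on every Nisnevich cover of every object of $Sm/k$.

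Second, I would invoke the \v{C}ech-to-derived spectral sequence $\check{H}^p(Y/X,\underline{H}^q(I))\Rightarrow H^{p+q}_{Nis}(X,I)$ and pass to the colimit over refinements: the universal vanishing just proved forces the sheafified cohomology presheaves $\underline{H}^q(I)$ to vanish for $q>0$, so the spectral sequence collapses to $H^i_{Nis}(X,I)=\check{H}^i_{Nis}(X,I)=0$ for $i>0$. The main point requiring care is categorical bookkeeping: every exactness statement must be formulated inside $NSZF_*$, where injectivity of $I$ is available, before being evaluated on the representables $\mathbb{Z}F_*(-,V)$ to yield information about sections of $I$. Once this distinction between the two categories is respected, the remainder is a standard comparison of \v{C}ech and sheaf cohomology on a Nisnevich site.
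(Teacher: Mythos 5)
The paper states this corollary without any proof (it is presented as a modification of the corresponding result of Suslin--Voevodsky), and your argument is exactly the intended one: identify $I(V)$ with $\operatorname{Hom}_{NSZF_*}(\mathbb{Z}F_*(-,V),I)$ via Yoneda (legitimate because $\mathbb{Z}F_*(-,V)$ is itself a Nisnevich sheaf, as the paper notes), apply injectivity of $I$ to the \v{C}ech resolution from the preceding lemma to kill $\check{H}^{>0}$ for every Nisnevich cover of every object, and conclude by comparing \v{C}ech and derived functor cohomology. The only imprecision is in your last step: the presheaves $\underline{H}^q(I)$ for $q>0$ are not ``forced to vanish'' by the \v{C}ech vanishing in one stroke --- that is the statement being proved --- rather one runs Cartan's criterion, i.e.\ an induction on $q$ in the \v{C}ech-to-derived spectral sequence $\check{H}^p(Y/X,\underline{H}^q(I))\Rightarrow H^{p+q}_{Nis}(X,I)$, using the inductive hypothesis to clear the rows $0<q<n$ and the universal \v{C}ech vanishing to clear the bottom row; with that rephrasing the proof is complete.
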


\begin{mcor}
For any Nisnevich sheaf $\mathcal{F}$ with  $\mathbb{Z}F_*$-transfers
$$Ext^i_{NSZF_*}(\mathbb{Z}F_*(X),\mathcal{F})=H^i_{Nis}(X,\mathcal{F}).$$
\end{mcor}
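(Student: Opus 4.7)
The plan is to identify both sides as the cohomology of the same complex, obtained from an injective resolution of $\mathcal{F}$ inside $NSZF_*$.

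First, I would fix an injective resolution $0\rightarrow \mathcal{F}\rightarrow I^0\rightarrow I^1\rightarrow \cdots$ in the abelian category $NSZF_*$. By definition of the derived functor,
$$Ext^i_{NSZF_*}(\mathbb{Z}F_*(X),\mathcal{F})=H^i\bigl(Hom_{NSZF_*}(\mathbb{Z}F_*(X),I^\bullet)\bigr).$$
Since $\mathbb{Z}F_*(X)=\mathbb{Z}F_*(-,X)$ is the free sheaf with $\mathbb{Z}F_*$-transfers associated to the representable presheaf of $X$, the Yoneda lemma gives a natural isomorphism $Hom_{NSZF_*}(\mathbb{Z}F_*(X),G)\cong G(X)$ for every sheaf $G$ with transfers. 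Hence the left-hand side equals $H^i(I^\bullet(X))$.

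Next, I would argue that the same complex computes $H^i_{Nis}(X,\mathcal{F})$. For this it suffices to observe that the forgetful functor from $NSZF_*$ to Nisnevich sheaves of abelian groups is exact: kernels and cokernels of morphisms of sheaves with transfers are computed on the underlying sheaves and automatically inherit a transfer structure. Thus $I^\bullet$ remains a resolution of $\mathcal{F}$ after forgetting transfers. The preceding corollary says that each $I^k$ has vanishing higher Nisnevich cohomology on every smooth variety, so $I^\bullet$ is an acyclic resolution for $\Gamma(X,-)$, and standard homological algebra yields $H^i_{Nis}(X,\mathcal{F})=H^i(I^\bullet(X))$. Combining with the previous paragraph gives the desired identification.

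The only point requiring a little care is the Yoneda identification $Hom_{NSZF_*}(\mathbb{Z}F_*(X),G)=G(X)$. Here one uses that $\mathbb{Z}F_*(-,X)$ is itself already a Nisnevich sheaf (by the first lemma of Section~2), so no sheafification is needed and the statement reduces to ordinary Yoneda in the category of presheaves with $\mathbb{Z}F_*$-transfers. Existence of injective resolutions in the Grothendieck category $NSZF_*$, exactness of the forgetful functor, and Nisnevich acyclicity of injectives — the content of the previous corollary — are all in place, so no further work is required; the latter acyclicity is really the only nontrivial input.
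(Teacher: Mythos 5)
Your argument is correct and is exactly the intended route: the paper omits a written proof, but places this corollary immediately after the statement that injective Nisnevich sheaves with $\mathbb{Z}F_*$-transfers are Nisnevich-acyclic, which is precisely the input you use, combined with Yoneda for the representable sheaf $\mathbb{Z}F_*(-,X)$ (a sheaf by the lemma in Section~2) and exactness of the forgetful functor. This matches the standard argument from Suslin--Voevodsky that this section is explicitly modifying, so nothing further is needed.
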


\begin{mprop}
Let $A^*\in D^-(NSZF_*)$ be a bounded compex of Nisnevich sheaves with $\mathbb{Z}F_*$-transfers. Then for any $X\in Sm/k$
$$H^i_{Nis}(X,A^*)=Hom_{D^-(NSZF_*)}(\mathbb{Z}F_*(X),A^*[i]).$$
\end{mprop}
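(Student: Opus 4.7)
The plan is to reduce the statement to the previous corollary by resolving $A^*$ by injectives and then applying the standard hypercohomology formalism. Fix $X\in Sm/k$ and abbreviate $\mathcal{A}:=NSZF_*$.

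First I would choose a quasi-isomorphism $A^*\to I^*$ with $I^*$ a $K$-injective resolution (which exists since $\mathcal{A}$ is a Grothendieck abelian category), or equivalently a Cartan--Eilenberg resolution of the bounded-above complex $A^*$ followed by passage to the total complex. By the Yoneda lemma for representable sheaves with transfers, $Hom_{\mathcal{A}}(\mathbb{Z}F_*(X),\mathcal{F})\cong\Gamma(X,\mathcal{F})$ for every $\mathcal{F}\in\mathcal{A}$.

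Using $K$-injectivity of $I^*$, morphisms in the derived category from $\mathbb{Z}F_*(X)$ to $A^*[i]$ are represented by honest chain maps modulo homotopy, so
$$Hom_{D^-(\mathcal{A})}(\mathbb{Z}F_*(X),A^*[i])\cong Hom_{K(\mathcal{A})}(\mathbb{Z}F_*(X),I^*[i])=H^i(I^*(X)).$$
On the other hand, the second corollary of the preceding section gives $H^j_{Nis}(X,I^k)=0$ for $j>0$ and every $k$, so each term $I^k$ is $\Gamma(X,-)$-acyclic. Hence $I^*(X)$ represents $R\Gamma_{Nis}(X,A^*)$, yielding
$$H^i_{Nis}(X,A^*)=H^i(I^*(X)).$$
Comparing the two displays proves the claim.

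The only step that demands real care is pinning down a legitimate injective replacement of a bounded-above but possibly unbounded-below complex; the $K$-injective machinery of Spaltenstein handles this cleanly, but the reader unfamiliar with it should notice that an alternative argument works by induction on the amplitude when $A^*$ has finite amplitude, using the truncation distinguished triangle $\tau_{<n}A^*\to A^*\to\tau_{\geq n}A^*$ together with the long exact sequences for both $Hom_{D^-}(\mathbb{Z}F_*(X),-)$ and $H^*_{Nis}(X,-)$ and the five-lemma to reduce to the single-sheaf case, which is exactly the preceding corollary; one then passes to all of $D^-$ via the system of truncations $\tau_{\geq -n}A^*$.
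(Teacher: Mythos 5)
Your argument is correct and is precisely the standard one intended here: resolve $A^*$ by injectives, use the Yoneda identification $Hom_{NSZF_*}(\mathbb{Z}F_*(X),\mathcal{F})\cong\mathcal{F}(X)$ together with the corollary that injective sheaves with $\mathbb{Z}F_*$-transfers are $\Gamma(X,-)$-acyclic (note this is the first of the two preceding corollaries, not the second), and compare the two computations of $H^i(I^*(X))$. The paper itself gives no proof of this proposition, deferring to the analogous statements in Suslin--Voevodsky, so your write-up --- including the remark that boundedness of $A^*$ lets one avoid the $K$-injective machinery entirely --- supplies exactly the missing argument.
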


\begin{mdef}
Presheaf is called contractible if there exist the preashef morfism  $\phi\colon \mathcal{F}(-)\rightarrow \mathcal{F}(\Delta^1\times -)$ such as $\d_0\phi=0$, $\d_1\phi=1_{\mathcal{F}}$.
\end{mdef}

\begin{mprop}
Let $\mathcal{G}$, $\mathcal{F}$ be quasistable Nisnevich sheaves with $\mathbb{Z}F_*$-transfers. Suppose that $\mathcal{G}$ is contractible, $\mathcal{F}$ is strongly homotopy invariant. Then $Ext^i_{NSZF_*}(\mathcal{G},\mathcal{F})=0$ for all $i\geq 0$.
\end{mprop}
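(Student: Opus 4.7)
The plan is to use the contracting homotopy on $\mathcal{G}$ to factor the identity of $Ext^i_{NSZF_*}(\mathcal{G}, \mathcal{F})$ through $0$, invoking strong homotopy invariance of $\mathcal{F}$ at the last step. First I would introduce the sheaf with $\mathbb{Z}F_*$-transfers $\mathcal{G}^{\Delta^1}$ defined by $\mathcal{G}^{\Delta^1}(U) := \mathcal{G}(\Delta^1 \times U)$, with transfers inherited from $\mathcal{G}$ by base change along $\Delta^1$. The projection $\Delta^1 \times U \to U$ and the endpoint inclusions $U \hookrightarrow \Delta^1 \times U$ supply sheaf morphisms (which I still denote $\mathrm{pr}, \d_0, \d_1$)
$$\mathrm{pr}\colon \mathcal{G} \longrightarrow \mathcal{G}^{\Delta^1}, \qquad \d_0, \d_1\colon \mathcal{G}^{\Delta^1} \longrightarrow \mathcal{G}$$
with $\d_t \circ \mathrm{pr} = \mathrm{id}_{\mathcal{G}}$ for $t = 0, 1$, while contractibility of $\mathcal{G}$ yields $\phi\colon \mathcal{G} \to \mathcal{G}^{\Delta^1}$ with $\d_0 \circ \phi = 0$ and $\d_1 \circ \phi = \mathrm{id}_{\mathcal{G}}$. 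Applying $Ext^i_{NSZF_*}(-, \mathcal{F})$ contravariantly gives the identities
$$\mathrm{pr}^* \circ \d_t^* = \mathrm{id}, \qquad \phi^* \circ \d_0^* = 0, \qquad \phi^* \circ \d_1^* = \mathrm{id}$$
on $Ext^i(\mathcal{G}, \mathcal{F})$.

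The heart of the argument is to show that $\mathrm{pr}^*\colon Ext^i(\mathcal{G}^{\Delta^1}, \mathcal{F}) \to Ext^i(\mathcal{G}, \mathcal{F})$ is an isomorphism. Granted this, $\d_0^*$ and $\d_1^*$ are both right-inverses to the iso $\mathrm{pr}^*$, hence equal to $(\mathrm{pr}^*)^{-1}$, so $\phi^* \circ \d_0^* = \phi^* \circ \d_1^*$, giving $0 = \mathrm{id}$ on $Ext^i(\mathcal{G}, \mathcal{F})$. To establish the isomorphism I would resolve $\mathcal{G}$ by representables $\bigoplus_\alpha \mathbb{Z}F_*(X_\alpha)$ and $\mathcal{F}$ by injectives $\mathcal{F} \to I^\bullet$ in $NSZF_*$. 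By the earlier corollary each $I^j$ has vanishing higher Nisnevich cohomology, so a hypercohomology spectral sequence reduces the problem to a comparison at the level of $\text{Hom}$-groups; then the identification $Ext^i(\mathbb{Z}F_*(X), \mathcal{F}) = H^i_{Nis}(X, \mathcal{F})$ from the preceding proposition collapses everything to the single statement that the projection $\Delta^1 \times X \to X$ induces an isomorphism $H^i_{Nis}(X, \mathcal{F}) \xrightarrow{\cong} H^i_{Nis}(\Delta^1 \times X, \mathcal{F})$, which is precisely the content of strong homotopy invariance of $\mathcal{F}$.

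The main obstacle is the non-representability of $\mathbb{Z}F_*(X)^{\Delta^1}$: the comparison $\text{Hom}_{NSZF_*}(\mathbb{Z}F_*(X)^{\Delta^1}, I^j) \cong I^j(\Delta^1 \times X)$ does not follow directly from Yoneda and requires a more careful analysis of the evaluation pairing, leveraging the vanishing of $Ext^{>0}$ into injective sheaves. Once this bookkeeping is in place, strong homotopy invariance of $\mathcal{F}$ propagates through the spectral sequence and completes the argument.
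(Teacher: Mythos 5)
The paper itself offers no proof of this proposition (the whole section is imported from \cite{SV} with proofs omitted), so there is nothing to compare line by line; your overall strategy --- factor $\mathrm{id}_{Ext^i(\mathcal{G},\mathcal{F})}$ as $\phi^*\circ\partial_1^*$, note $\phi^*\circ\partial_0^*=0$, and force $\partial_0^*=\partial_1^*$ by showing both are sections of an isomorphism $\mathrm{pr}^*$ --- is exactly the standard Suslin--Voevodsky template, and that part of your logic is correct. The problem is that the step you defer as ``bookkeeping'' is in fact the entire content of the proposition, and the route you sketch for it cannot be completed. First, the functor $\mathcal{G}\mapsto\mathcal{G}^{\Delta^1}$, $\mathcal{G}^{\Delta^1}(U)=\mathcal{G}(\Delta^1\times U)$, is only left exact on Nisnevich sheaves (its failure of right exactness is measured by $H^1_{Nis}(\Delta^1\times U,-)$, which does not vanish on Henselian local $U$ for a general sheaf), so applying it to a resolution of $\mathcal{G}$ by representables does not produce a resolution of $\mathcal{G}^{\Delta^1}$. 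Second, and more seriously, for $\mathcal{G}=\mathbb{Z}F_*(X)$ the sheaf $\mathcal{G}^{\Delta^1}$ is $U\mapsto\mathbb{Z}F_*(\Delta^1\times U,\,X)$: the $\Delta^1$ sits on the \emph{source} of the framed correspondences, whereas $\mathbb{Z}F_*(\Delta^1\times X)$ is $U\mapsto\mathbb{Z}F_*(U,\,\Delta^1\times X)$. These are genuinely different sheaves with no natural comparison map (a support finite over $\Delta^1\times U$ need not be finite over $U$), so the identification $Ext^i(\mathbb{Z}F_*(X)^{\Delta^1},I^j)\cong I^j(\Delta^1\times X)$ on which your reduction rests is not available, and strong homotopy invariance of $\mathcal{F}$ --- a statement about $H^i_{Nis}(\Delta^1\times X,\mathcal{F})$ --- never gets a chance to act.

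The standard repair, which is what \cite{SV} actually does, is to move the interval to the second variable through the tensor--hom adjunction. The contraction $\phi\colon\mathcal{G}\to\mathcal{G}^{\Delta^1}=\underline{Hom}(\mathbb{Z}F_*(\Delta^1),\mathcal{G})$ is adjoint to a map $\widetilde\phi\colon\mathcal{G}\otimes\mathbb{Z}F_*(\Delta^1)\to\mathcal{G}$ killing the $0$-section and restricting to the identity on the $1$-section. Choosing an injective resolution $\mathcal{F}\to I^\bullet$, one has $Hom(\mathcal{G}\otimes\mathbb{Z}F_*(\Delta^1),I^\bullet)=Hom(\mathcal{G},(I^\bullet)^{\Delta^1})$, and the comparison one must prove is that the canonical map of complexes $I^\bullet\to(I^\bullet)^{\Delta^1}$ induces an isomorphism after applying $Hom(\mathcal{G},-)$ and taking cohomology. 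Stalkwise, $H^i((I^\bullet)^{\Delta^1}(U))=H^i_{Nis}(\Delta^1\times U,\mathcal{F})$, so strict homotopy invariance of $\mathcal{F}$ enters precisely here, saying that $I^\bullet\to(I^\bullet)^{\Delta^1}$ is a quasi-isomorphism; one then still has to check that the terms $(I^j)^{\Delta^1}$ are acyclic for $Hom(\mathcal{G},-)$ (using the earlier corollaries on vanishing of $H^{>0}_{Nis}(-,I)$). If you rewrite your argument in this adjoint form, the rest of your reasoning goes through verbatim; as written, the key isomorphism is asserted but not reachable by the stated method.
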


\begin{mcor}
Let $\mathcal{G}$, $\mathcal{F}$ be quasistable Nisnevich sheaves with $\mathbb{Z}F_*$-transfers. Suppose that  $\mathcal{F}$ is strongly homotopy invariant and $\mathcal{G}$ has a finite resolution $$0\rightarrow \mathcal{G}\rightarrow  \mathcal{G}^0\rightarrow\ldots \rightarrow  \mathcal{G}^n\rightarrow 0,$$
where $\mathcal{G}^i$ is contractible and quazistable  Nisnevich sheaf with $\mathbb{Z}F_*$-transfers for all $i$. Then $Ext^i_{NSZF_*}(\mathcal{G},\mathcal{F})=0$.
\end{mcor}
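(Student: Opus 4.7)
The plan is an induction on the length of the resolution, reducing step by step to the previous proposition. First I split the long exact sequence $0 \to \mathcal{G} \to \mathcal{G}^0 \to \cdots \to \mathcal{G}^n \to 0$ into short exact sequences
$$0 \to K^j \to \mathcal{G}^j \to K^{j+1} \to 0, \qquad j = 0, \ldots, n,$$
where $K^0 = \mathcal{G}$, $K^j$ is the image of $\mathcal{G}^{j-1} \to \mathcal{G}^j$ for $1 \le j \le n$, and $K^{n+1} = 0$ (so the last sequence identifies $K^n$ with $\mathcal{G}^n$).

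Next I apply $Hom(-,\mathcal{F})$ to each of these short exact sequences and use the long exact sequence of $Ext$-groups in the Grothendieck abelian category $NSZF_*$. By the preceding proposition, applied to the contractible quasistable sheaf $\mathcal{G}^j$ against the strongly homotopy invariant $\mathcal{F}$, one has $Ext^i(\mathcal{G}^j,\mathcal{F})=0$ for every $i \ge 0$ and every $j$. Plugging these zeros into each long exact sequence yields connecting isomorphisms
$$Ext^i(K^j,\mathcal{F}) \xrightarrow{\sim} Ext^{i+1}(K^{j+1},\mathcal{F}), \qquad i \ge 0.$$

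Iterating these isomorphisms $n$ times, starting from $K^0=\mathcal{G}$, gives
$$Ext^i_{NSZF_*}(\mathcal{G},\mathcal{F}) \cong Ext^{i+n}(K^n,\mathcal{F}) = Ext^{i+n}_{NSZF_*}(\mathcal{G}^n,\mathcal{F}) = 0,$$
as required. There is no real obstacle here: the intermediate sheaves $K^j$ need not be quasistable or contractible themselves, but the previous proposition is only invoked on the terms $\mathcal{G}^j$ of the resolution, where the hypotheses hold by assumption. Equivalently, the same argument can be packaged as a hyper-$Ext$ spectral sequence $E_1^{p,q} = Ext^q(\mathcal{G}^p,\mathcal{F}) \Rightarrow Ext^{p+q}(\mathcal{G},\mathcal{F})$ whose $E_1$-page vanishes termwise by the previous proposition, forcing the abutment to vanish.
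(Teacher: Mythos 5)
Your proposal is correct; the paper states this corollary without any written proof, and your dimension-shifting argument --- splitting the resolution into short exact sequences $0\to K^j\to\mathcal{G}^j\to K^{j+1}\to 0$ and using the termwise vanishing $Ext^i_{NSZF_*}(\mathcal{G}^j,\mathcal{F})=0$ supplied by the preceding proposition to obtain $Ext^i(\mathcal{G},\mathcal{F})\cong Ext^{i+n}(\mathcal{G}^n,\mathcal{F})=0$ --- is precisely the standard deduction the author intends by calling this a corollary. Your observation that the intermediate kernels $K^j$ need not themselves satisfy the hypotheses of the proposition, and that this does not matter, is the only point requiring care, and you handle it correctly.
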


\begin{mcor}
Let $A^*$ be a bounded complex of quazistable Nisnevich sheaves with $\mathbb{Z}F_*$-transfers, $A^i$ is contractible for all $i$, cohomology preasheaves $\mathcal{H}^i=H^i(A^*)$ are homotopy invariatn and quasistable. Then the complex $A^*$ is acyclic.
\end{mcor}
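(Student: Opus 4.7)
The plan is to prove $\mathcal{H}^i(A^*) = 0$ by decreasing induction on $i$. Let $M$ denote the top degree of $A^*$ and write $Z^i = \ker(d^i)$, $B^i = \operatorname{im}(d^{i-1})$. Fix $i$ and suppose inductively that $\mathcal{H}^j = 0$ for every $j > i$; this is vacuous at the starting step $i = M$ and carries down as the induction proceeds.

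Under that assumption the tail of $A^*$ past degree $i$ collapses to an exact sequence
$$0 \to Z^i \to A^i \to A^{i+1} \to \ldots \to A^M \to 0,$$
which is precisely a finite right resolution of $Z^i$ by contractible quasistable sheaves. The preceding corollary then delivers $Ext^j_{NSZF_*}(Z^i, \mathcal{F}) = 0$ for all $j \geq 0$ and every strongly homotopy invariant quasistable $\mathcal{F}$. Applying $Hom(-, \mathcal{F})$ to the short exact sequence $0 \to B^i \to Z^i \to \mathcal{H}^i \to 0$ and using left exactness of $Hom$ gives an injection $Hom(\mathcal{H}^i, \mathcal{F}) \hookrightarrow Hom(Z^i, \mathcal{F}) = 0$, whence $Hom(\mathcal{H}^i, \mathcal{F}) = 0$.

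Taking $\mathcal{F} = \mathcal{H}^i$ then forces the identity $id_{\mathcal{H}^i}$ to be zero and hence $\mathcal{H}^i = 0$, closing the induction and proving acyclicity of $A^*$.

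The step on which the whole argument rests is the substitution $\mathcal{F} = \mathcal{H}^i$ in the last line: the preceding corollary requires the test sheaf to be strongly homotopy invariant, whereas the hypothesis of the statement provides only homotopy invariance and quasistability of $\mathcal{H}^i$. One must invoke here the framed analog of Voevodsky's strict homotopy invariance theorem, promoting h.i.\ plus quasistability to SHI. A secondary bookkeeping point is that $Z^i$ itself has to be quasistable for the previous corollary to apply; this follows from quasistability being preserved by kernels in $NSZF_*$.
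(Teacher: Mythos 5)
Your argument is correct and is exactly the Suslin--Voevodsky-style descending induction that the paper intends here (the corollary is stated without proof as a consequence of the preceding one, in a section explicitly described as a modification of \cite{SV}), and you correctly isolate the one nonformal input: the strict homotopy invariance theorem of \cite{GP2}, which the paper itself invokes later, is what lets you feed $\mathcal{H}^i$ back in as the test sheaf $\mathcal{F}$. The only bookkeeping to make explicit is that the $Hom$--$Ext$ argument lives in the category of sheaves, so what you kill at each stage is the Nisnevich sheafification $(\mathcal{H}^i)_{Nis}$ --- which is precisely what acyclicity of the complex of sheaves requires, and is also the object to which the strict homotopy invariance theorem applies.
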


\begin{mcor}\label{C_sh}
Let $A^*$ ba a complex of Nisnevich sheaves with $\mathbb{Z}F_*$-transfers which satisfies the following conditions:

$\mathrm{1)}$ $A^*_{Nis}=0;$

$\mathrm{2)}$ presheaves $H^i(C^*(A^*))$ are quasistable.

Then  $C^*(A^*)$ is locally acyclic.
\end{mcor}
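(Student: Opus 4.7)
The plan is to view $C^*(A^*)$ as the totalization of the double complex $D^{p,q}=A^q(-\times\Delta^p)$ and combine the standard spectral sequence of this double complex with the acyclicity results for quasistable homotopy invariant sheaves established earlier in this section. The cohomology presheaves $\mathcal{H}^i(C^*(A^*))$ are automatically $\mathbb{A}^1$-homotopy invariant by the Suslin construction, and by condition~2 they are quasistable; so the target presheaves are quasistable, homotopy invariant, with $\mathbb{Z}F_*$-transfers, and the goal reduces to showing that their Nisnevich sheafifications vanish.

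I would first use condition~1 to observe that for every Henselian local $U$ the complex $A^*(U)$ is exact: on a Henselian local scheme the higher Nisnevich cohomology of any sheaf vanishes, so $A^*(U)$ computes the hypercohomology $\mathbb{H}^*_{\mathrm{Nis}}(U,A^*)$, and this hypercohomology is zero by the spectral sequence $H^a_{\mathrm{Nis}}(U,\mathcal{H}^b(A^*))\Rightarrow\mathbb{H}^{a+b}_{\mathrm{Nis}}(U,A^*)$ together with the vanishing $\mathcal{H}^b(A^*)=0$ from condition~1.

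The crucial step is to transfer this pointwise exactness into sheaf-level vanishing of $\mathcal{H}^{i}(C^*(A^*))$, using the spectral sequence $E_1^{p,q}=\mathcal{H}^q(A^*(-\times\Delta^p))\Rightarrow \mathcal{H}^{p+q}(C^*(A^*))$. Since $U\times\Delta^p$ is not Henselian local, the $E_1$-page does not vanish on the nose, so I would instead apply the previous corollary on bounded complexes (quasistable contractible terms with quasistable, homotopy invariant cohomology) to suitable finite truncations of $C^*(A^*)$: for $p\geq 1$ each Suslin term $C^p(A^q)$ is contractible via the simplicial contraction of $\Delta^p$ onto a vertex, so the potentially non-contractible contributions are confined to the column $p=0$.

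The main obstacle is the handling of the $p=0$ row, i.e. the copy of $A^*$ itself, which is not termwise contractible. I expect this to be resolved by first replacing $A^*$ by a Cartan--Eilenberg resolution into injective Nisnevich sheaves with $\mathbb{Z}F_*$-transfers---these are Nisnevich acyclic by the earlier corollary---and then verifying that the Suslin complex of the resolution has the same quasistable homotopy invariant cohomology presheaves as $C^*(A^*)$. The Ext-vanishing proposition and its corollary then kill the residual contributions, yielding the desired Nisnevich-local acyclicity of $C^*(A^*)$.
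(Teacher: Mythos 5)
First, note that the paper itself offers no written proof of this corollary: it is stated as a formal consequence of the preceding results of the section (the Ext-vanishing for contractible quasistable sheaves against strongly homotopy invariant ones, and the acyclicity criterion for bounded complexes with contractible quasistable terms), so your proposal can only be measured against that intended deduction. Your overall frame is right: local acyclicity means vanishing of the Nisnevich sheafifications of the presheaves $H^i(C^*(A^*))$, these are homotopy invariant by the Suslin construction and quasistable by hypothesis 2), and hypothesis 1) gives exactness of $A^*(U)$ for $U$ local Henselian.

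The genuine gap is the step ``for $p\geq 1$ each Suslin term $C^p(A^q)$ is contractible via the simplicial contraction of $\Delta^p$ onto a vertex, so the potentially non-contractible contributions are confined to the column $p=0$.'' With the paper's definition, contractibility of $\mathcal{G}$ means the identity is $\mathbb{A}^1$-homotopic to the \emph{zero} endomorphism ($\partial_0\phi=0$, $\partial_1\phi=1_{\mathcal{G}}$). Contracting $\Delta^p$ onto a vertex only exhibits a homotopy between $\mathrm{id}$ and the nonzero idempotent $A^q(\Delta^p\times -)\to A^q(-)\to A^q(\Delta^p\times -)$; hence $A^q(\Delta^p\times-)$ is homotopy equivalent to $A^q$, not contractible. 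The structural fact the Suslin--Voevodsky-style argument actually needs is the functorial splitting $A^q(\Delta^p\times-)\cong A^q\oplus D^{p,q}$ with $D^{p,q}$ contractible, where contractibility of the reduced summands $\ker\bigl(i_0^*\colon A^q(\mathbb{A}^j\times-)\to A^q(\mathbb{A}^{j-1}\times-)\bigr)$ comes from the multiplication map $\mathbb{A}^1\times\mathbb{A}^1\to\mathbb{A}^1$, not from contracting $\Delta^p$ to a point. Consequently a constant copy of $A^q$ sits in \emph{every} column, and these copies assemble into the subcomplex $A^*\otimes\mathbb{Z}[\Delta^\bullet]$, which has to be killed using hypothesis 1), not contractibility; your plan as written never confronts this part. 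Finally, the Cartan--Eilenberg repair of the $p=0$ column is circular: $C^*(-)$ does not take quasi-isomorphisms of complexes of sheaves to local quasi-isomorphisms --- this failure is exactly what the corollary is about --- so ``verifying that the Suslin complex of the resolution has the same cohomology presheaves as $C^*(A^*)$'' presupposes the statement being proved.
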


\section{Mayer-Vietoris sequence and etale excision for framed motives}

\begin{mst} \label{C} Let $0\rightarrow\mathcal{F}_1\rightarrow \mathcal{F}_2\rightarrow \mathcal{F}_3 \rightarrow 0$ be an exact sequence of Nisnevich sheaves with $\mathbb{Z}F_*$-tranfers and cohomology preasheaves $H^i(C^*(\mathcal{F}_r))$ are quasistable for all $i$, $r$. Then 
$$C^*(\mathcal{F}_1)\rightarrow C^*(\mathcal{F}_2)\rightarrow C^*(\mathcal{F}_3)\rightarrow C^*(\mathcal{F}_1)[1]$$
is distinguished triangle in $D^-(NSZF_*)$.
\end{mst}

\begin{mst}
Let $A, B, C, D$ be sets. Consider the diagram
$$\xymatrix{A\ar[r]^{\alpha}\ar[d]^{\beta}& B\ar[d]^{\delta}\\
C\ar[r]^{\gamma}& D}$$
and the sequense of abelian groups
$$0\rightarrow\mathbb{Z}[A]\xrightarrow{\begin{pmatrix}\alpha\\-\beta\end{pmatrix}}
\begin{array}{ccc}\mathbb{Z}[B]\\ \oplus \\\mathbb{Z}[C]\end{array}\xrightarrow{\begin{pmatrix}\delta &\gamma\end{pmatrix}}\mathbb{Z}[D]\rightarrow 0.$$
Then

$\mathrm{1)}$ the sequence is the complex if and only if the diagram comutes;

$\mathrm{2)}$  the sequence is exact on the left  if and only if the diagram is Cartesian square;

$\mathrm{3)}$    the sequence is exact on the right if and only if the diagram is pushout square.
\end{mst}

\begin{mlm}
Let $U$ be a spectrum of Henselian ring, $X\subset Y$ be an open imbedding. Then the map
$F_n(U,X)\rightarrow F_n(U,Y)$
is injective.
\end{mlm}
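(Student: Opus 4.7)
The approach is to unwind the equivalence relation defining $F_n$ and exploit the fact that an open immersion is a monomorphism in the category of schemes. The map $j_*\colon F_n(U,X)\rightarrow F_n(U,Y)$ is induced by post-composition with $j\colon X\hookrightarrow Y$: the class of a correspondence $(Z,W,\phi_1,\dots,\phi_n;g\colon W\rightarrow X)$ is sent to the class of $(Z,W,\phi_1,\dots,\phi_n;j\circ g\colon W\rightarrow Y)$. The support condition is preserved because $(j\circ g)^{-1}(Y) = W = g^{-1}(X)$, and equivalent correspondences to $X$ remain equivalent after composing with $j$, so the map is well-defined.

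For injectivity, I would pick representatives $(Z_i,W_i,\phi_1^{(i)},\dots,\phi_n^{(i)},g_i\colon W_i\rightarrow X)$ with $Z_i$ connected, $i=1,2$, whose classes have equal images in $F_n(U,Y)$. The equivalence relation in $F_n(U,Y)$ forces $Z_1=Z_2=:Z$ as closed subschemes of $U\times\mathbb{A}^n$ and provides a common étale neighborhood $V\rightarrow U\times\mathbb{A}^n$ of $Z$, together with étale morphisms $\pi_i\colon V\rightarrow W_i$ over $U\times\mathbb{A}^n$ compatible with the embedding of $Z$, such that $\phi_k^{(1)}\circ\pi_1=\phi_k^{(2)}\circ\pi_2$ for each $k$ and $(j\circ g_1)\circ\pi_1=(j\circ g_2)\circ\pi_2\colon V\rightarrow Y$.

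The crucial observation is that $j$ is a monomorphism of schemes, so from the last equality one concludes $g_1\circ\pi_1=g_2\circ\pi_2\colon V\rightarrow X$. Hence $V$ already witnesses the equivalence of the two representatives in $F_n(U,X)$, giving injectivity. I do not anticipate a serious obstacle here; in fact the Henselian hypothesis on $U$ does not appear to be used directly in the argument and presumably reflects the context in which the lemma will be applied, where $F_n(U,-)$ is identified with local sections of the associated Nisnevich sheaf without further sheafification.
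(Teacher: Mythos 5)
The paper states this lemma without proof, so there is nothing to compare against; your argument is correct and is the expected one: an open immersion is a monomorphism of schemes, so post-composition with $j$ is injective on representatives, and since equivalence is tested on a common \'etale neighborhood of the (necessarily equal) supports, an equivalence over $Y$ between two correspondences landing in $X$ descends to an equivalence over $X$. Your observation that the Henselian hypothesis on $U$ is not actually used is also accurate --- it appears to be carried along only because that is the setting in which the lemma is applied.
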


\begin{mlm}\label{MV_s}
Let $X=X_1\cup X_2$ be a union of its open subsets, $Z\subset X$ be a closed subvariety. Denote $X_{12}=X_1\cap X_2$, $Z_1=Z\cap X_1$, $Z_2=Z\cap X_2$, $Z_{12}=Z\cap X_1\cap X_2$. Let $v_i\colon X_{12}\rightarrow X_i$ be inclusion $i=\overline{1,2}$, $u_i\colon X_i\rightarrow X$, $i=\overline{1,2}$. Then the sequence of Nisnevich sheaves
$$0\rightarrow\mathbb{Z}F_n \left(\dfrac{X_{12}}{X_{12}-Z_{12}}\right)\xrightarrow{\begin{pmatrix}v_{1*}\\-v_{2*}\end{pmatrix}}
\begin{array}{ccc}\mathbb{Z}F_n\left(\dfrac{X_{1}}{X_{1}-Z_{1}}\right)\\ \oplus \\ \mathbb{Z}F_n\left(\dfrac{X_{2}}{X_{2}-Z_{2}}\right)\end{array}\xrightarrow{\begin{pmatrix}u_{1*}&u_{2*}\end{pmatrix}}\mathbb{Z}F_n\left(\dfrac{X}{X-Z}\right)\rightarrow 0$$
is exact.
\end{mlm}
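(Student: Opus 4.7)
The plan is to reduce the assertion to the set-theoretic criterion of the preceding statement. Since $F_n$ is a basis of $\mathbb{Z}F_n$ and all four maps in the diagram send connected-support correspondences to connected-support correspondences, it suffices, by the previous statement, to verify that for every Henselian local $U$ the square of sets
\[
\xymatrix{
F_n\!\left(U,\tfrac{X_{12}}{X_{12}-Z_{12}}\right)\ar[r]^{v_{1*}}\ar[d]_{v_{2*}}&F_n\!\left(U,\tfrac{X_1}{X_1-Z_1}\right)\ar[d]^{u_{1*}}\\
F_n\!\left(U,\tfrac{X_2}{X_2-Z_2}\right)\ar[r]_{u_{2*}}&F_n\!\left(U,\tfrac{X}{X-Z}\right)
}
\]
is both Cartesian and cocartesian. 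Exactness of a sequence of Nisnevich sheaves is checked on Nisnevich stalks, and those stalks are precisely sections over Henselizations of local rings, so this suffices.

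For the pushout property I would take a representative $(Z^\alpha,W,\phi_1,\ldots,\phi_n;\,g\colon W\to X)$ of an element of $F_n(U,X/(X-Z))$ with $Z^\alpha$ connected. Since $Z^\alpha$ is finite over the Henselian local $U$, it is a disjoint union of Henselian local schemes; connectedness forces $Z^\alpha$ itself to be Henselian local, with a unique closed point $z_0$. Set $g(z_0)\in X_i$ for some $i\in\{1,2\}$; since every point of $Z^\alpha$ specializes to $z_0$ and $X_i$ is open in $X$, we get $g(Z^\alpha)\subset X_i$. Then $W':=g^{-1}(X_i)\subset W$ is still an \'etale neighbourhood of $Z^\alpha$ in $U\times\mathbb{A}^n$, and $(Z^\alpha,W',\phi|_{W'};g|_{W'}\colon W'\to X_i)$ is a framed correspondence to $X_i/(X_i-Z_i)$ (the support equality $Z^\alpha=g^{-1}(Z_i)\cap V(\phi)$ is automatic from $g(W')\subset X_i$), mapping to the original class.

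For the Cartesian property, injectivity of $F_n(U,X_{12}/(X_{12}-Z_{12}))\to F_n(U,X_i/(X_i-Z_i))$ is a direct application of the preceding injectivity lemma for open immersions. Given $\alpha_i\in F_n(U,X_i/(X_i-Z_i))$ with the same image downstairs, passing to a common \'etale refinement one can assume both are represented on the same $(Z^\alpha,W,\phi)$; equality in $F_n(U,X/(X-Z))$ then means $u_1\circ g_1=u_2\circ g_2$ as maps $W\to X$. The common composite therefore has image in $X_1\cap X_2=X_{12}$, and since $u_i$ is a monomorphism, both $g_i$ factor through one and the same map $g\colon W\to X_{12}$, producing the required element of $F_n(U,X_{12}/(X_{12}-Z_{12}))$; the support condition at the level of $X_{12}$ is again automatic.

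The main technical point, and the only place where the hypotheses are really used, is the specialization argument in the pushout step: it requires simultaneously that $Z^\alpha$ be connected (so that we may restrict to $F_n$ rather than work with $Fr_n$), that $U$ be Henselian local (so that $Z^\alpha$, being finite and connected over $U$, has a unique closed point), and that $X_1,X_2$ be open in $X$ (so that specializations cannot escape $X_i$). Once this is in place, invoking the preceding statement on free abelian groups on sets yields the claimed short exact sequence of Nisnevich sheaves.
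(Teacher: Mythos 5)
The paper states Lemma \ref{MV_s} without proof, so there is no argument to compare against; but the two results placed immediately before it (the set-theoretic criterion for exactness of sequences of free abelian groups and the injectivity lemma for $F_n$ over a Henselian $U$) are evidently the intended ingredients, and your proof assembles exactly these in the expected way. Your argument is correct: the specialization-to-the-closed-point step for the connected support $Z^\alpha$ over Henselian local $U$ is the real content, and your verifications that the support conditions are preserved under restriction to $g^{-1}(X_i)$ and under factorization through $X_{12}$ are the right checks. Two small caveats: the paper's injectivity lemma is stated only for the absolute sets $F_n(U,X)\rightarrow F_n(U,Y)$, so you need its (easy, and essentially identical) relative analogue for $F_n\left(U,\frac{X_{12}}{X_{12}-Z_{12}}\right)\rightarrow F_n\left(U,\frac{X_i}{X_i-Z_i}\right)$; and the set-level criterion, read literally, does not give exactness at the middle term without also knowing that $u_{1*}$ and $u_{2*}$ are injective on basis elements --- a fact your common-refinement argument supplies anyway, so it is worth making it explicit.
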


\begin{mdef}
Let $X$ be a smooth variety and $Z\subset X$ is its clozed subset. Let us denote

$$M_Z^{fact}X=\dfrac{C^*\mathbb{Z}F(X)}{C^*\mathbb{Z}F(X-Z)},$$
$$M_Z^{geom}X=C^*\mathbb{Z}F\left(\dfrac{X}{X-Z}\right).$$

Note that there exist the natural map
$$M_Z^{fact}(X)\rightarrow M_Z^{geom}(X).$$
\end{mdef}

\begin{mst}\label{MV} Let the variety $X=X_1\cup X_2$ be an union of its open subsets,  $Z\subset X$ be its clozed subvariety. Denote $X_{12}=X_1\cap X_2$, $Z_1=Z\cap X_1$, $Z_2=Z\cap X_2$, $Z_{12}=Z\cap X_1\cap X_2$.  Then the triangles 
$$\mathrm{1)~} M_{Z_{12}}^{fact}(X_{12})\rightarrow M_{Z_{1}}^{fact}(X_1) \oplus M_{Z_{2}}^{fact}(X_2)\rightarrow M_{Z}^{fact}X\rightarrow M_{Z_{12}}^{fact}(X_{12})[1], $$

$$\mathrm{2)~} M_{Z_{12}}^{geom}(X_{12})\rightarrow M_{Z_{1}}^{geom}(X_1) \oplus M_{Z_{2}}^{geom}(X_2)\rightarrow M_{Z}^{fact}X\rightarrow M_{Z_{12}}^{geom}(X_{12})[1]$$
are distinguished.
\end{mst}

\begin{mcor}
Let $\pi\colon L\rightarrow X$ ba a vector bundle over $X$, $X_1,X_2\subset X$ be open subsets, $X_{12}=X_1\cap X_2$, $L_i=L|_{X_i}$. Then the triangles 
$$M^{fact}_{X_{12}}(L_{12})\rightarrow M^{fact}_{X_{1}}(L_{1})\oplus M^{fact}_{X_{2}}(L_{2})\rightarrow M^{fact}_{X}(L),$$
$$M^{geom}_{X_{12}}(L_{12})\rightarrow M^{geom}_{X_{1}}(L_{1})\oplus M^{geom}_{X_{2}}(L_{2})\rightarrow M^{geom}_{X}(L)$$
are distinguished in $D^-(NSZF_*)$. 
\end{mcor}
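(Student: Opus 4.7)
The plan is to apply Statement \ref{MV} directly, taking $L$ itself as the ambient smooth variety and the zero section $X \hookrightarrow L$ as the closed subvariety. First I would set up the data: since $X_1, X_2$ are open in $X$ and $\pi$ is (in particular) continuous, $L_1 = \pi^{-1}(X_1)$ and $L_2 = \pi^{-1}(X_2)$ form an open cover of $L$, and $L_1 \cap L_2 = \pi^{-1}(X_1 \cap X_2) = L_{12}$. This puts us exactly in the setting of Statement \ref{MV} with $(L; L_1, L_2; X)$ in place of $(X; X_1, X_2; Z)$.

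Next I would check the compatibility of the closed subvariety with the covering. The zero section $X \hookrightarrow L$ meets each open $L_i$ precisely in the zero section of $L_i$, which is $X_i$; similarly $X \cap L_{12} = X_{12}$. Thus in the notation of Statement \ref{MV} we have $Z_i = X_i$ and $Z_{12} = X_{12}$, which matches the subscripts appearing in the corollary.

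With these identifications, Statement \ref{MV} applied to the datum $(L; L_1, L_2; X)$ yields the two distinguished triangles
\begin{gather*}
M^{fact}_{X_{12}}(L_{12}) \to M^{fact}_{X_{1}}(L_1) \oplus M^{fact}_{X_{2}}(L_2) \to M^{fact}_{X}(L) \to M^{fact}_{X_{12}}(L_{12})[1],\\
M^{geom}_{X_{12}}(L_{12}) \to M^{geom}_{X_{1}}(L_1) \oplus M^{geom}_{X_{2}}(L_2) \to M^{geom}_{X}(L) \to M^{geom}_{X_{12}}(L_{12})[1]
\end{gather*}
in $D^-(NSZF_*)$, which is exactly what is asserted.

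There is essentially no obstacle here: the corollary is a specialization of Statement \ref{MV} to the particular situation where the ambient smooth variety is the total space of a vector bundle and the closed subvariety is the zero section. The only point requiring any verification is the (formal) identification of the intersections of the zero section with the open pieces $L_i$ with the zero sections of the restricted bundles, which is immediate from the definition of the restriction of a vector bundle to an open subvariety.
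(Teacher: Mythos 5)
Your proof is correct and is exactly the argument the paper intends: the corollary is stated without proof immediately after Statement \ref{MV} precisely because it is the specialization of that statement to the ambient variety $L$ with open cover $L_1, L_2$ and closed subvariety the zero section $X$. The only caveat is that the corollary's hypotheses omit the (clearly intended) condition $X = X_1 \cup X_2$, which you correctly use to ensure $L_1 \cup L_2 = L$.
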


\begin{mth} \label{bundle_iso}
Let $p\colon L\rightarrow X$ be a vector bundle. Then the map
$$M^{fact}_X(L)\rightarrow M^{geom}_X(L)$$
is the local isomorphism.
\end{mth}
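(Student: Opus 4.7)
The plan is to apply Corollary \ref{C_sh} to the cone of the comparison map. Set $\mathcal F = \mathbb{Z}F(L)/\mathbb{Z}F(L-X)$ and $\mathcal G = \mathbb{Z}F(L/(L-X))$, so that the map in question is $C^*(\mathcal F)\to C^*(\mathcal G)$, induced by a natural morphism $\mathcal F\to \mathcal G$ of Nisnevich sheaves with $\mathbb{Z}F_*$-transfers; let $\mathcal K^\bullet$ denote its cone, viewed as a two-term complex of such sheaves. It suffices to show that $C^*(\mathcal K^\bullet)$ is locally acyclic, so by Corollary \ref{C_sh} the task reduces to verifying (a) $\mathcal K^\bullet_{Nis}=0$, i.e.\ that $\mathcal F\to\mathcal G$ is an isomorphism on Nisnevich stalks (equivalently, on sections over spectra of Henselian local rings), and (b) the cohomology presheaves $H^i(C^*(\mathcal K^\bullet))$ are quasistable.

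For condition (b), I would combine the Statement preceding Corollary 3.1, which yields quasistability of $H^i(C^*\mathbb{Z}F(Y))$ for smooth $Y$ (applied to $Y=L$ and $Y=L-X$), with the long exact sequence coming from the short exact sequence $0\to\mathbb{Z}F(L-X)\to\mathbb{Z}F(L)\to\mathcal F\to 0$, to obtain quasistability of $H^i(C^*(\mathcal F))$. An analogous quasistability statement for the relative sheaf $\mathcal G = \mathbb{Z}F(L/(L-X))$ then yields quasistability of $H^i(C^*(\mathcal G))$, and the long exact sequence of the cone triangle gives (b).

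Condition (a) is the main obstacle. I would first use the Mayer-Vietoris Corollary following Statement \ref{MV}, together with the five lemma in $D^-(NSZF_*)$ and induction on a trivializing Zariski cover of $X$, to reduce to the case $L = X \times \mathbb{A}^n$. For trivial $L$, an element of $F_m(U, L/(L-X))$ over Henselian local $U$ has the form $(Z, W, \phi_1,\ldots,\phi_m; g\colon W\to L)$ with $Z=g^{-1}(X\times 0)\cap V(\phi_1,\ldots,\phi_m)$. To build the inverse of $\mathcal F(U)\to\mathcal G(U)$, one must lift such data to $\mathbb{Z}F_m(U, L)/\mathbb{Z}F_m(U, L-X)$. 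Since $V(\phi_1,\ldots,\phi_m)$ may fail to be finite over $U$, one uses that $Z$ is finite over the Henselian local $U$ to split $V(\phi_1,\ldots,\phi_m) = Z \sqcup Z'$ in a smaller etale neighborhood, with $Z'$ mapping into $L-X$ under $g$. The technical heart of the argument is verifying that this lift is well defined modulo the relation in item 6) of the Notation and independent of the etale neighborhood chosen; granted this, both compositions are formally the identity and (a) follows.
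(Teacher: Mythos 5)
Your reduction to the trivial bundle via Mayer--Vietoris and the five lemma matches the paper's inductive step, but the way you handle the trivial case contains a genuine gap, and it is located exactly where the real difficulty of the theorem lies. In condition (a) you assert that the map $\mathcal F\to\mathcal G$, i.e. $\mathbb{Z}F_m(U,L)/\mathbb{Z}F_m(U,L-X)\to\mathbb{Z}F_m(U,L/(L-X))$, is an isomorphism on Henselian local $U$, to be proved by splitting $V(\phi_1,\ldots,\phi_m)=Z\sqcup Z'$ after shrinking the \'etale neighborhood. This splitting does not exist in general. For a correspondence with target $L/(L-X)$ only $Z=g^{-1}(X\times 0)\cap V(\phi_1,\ldots,\phi_m)$ is required to be finite over $U$; the full vanishing locus $V(\phi_1,\ldots,\phi_m)$ may have positive-dimensional components through points of $Z$ over the closed point of $U$, so it is not quasi-finite over $U$, $Z$ is closed but not open in it, and no shrinking of $W$ to a smaller neighborhood of $Z$ removes the excess locus. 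Consequently $\mathcal F\to\mathcal G$ is \emph{not} surjective on Henselian stalks, the cone $\mathcal K^\bullet$ does not satisfy $\mathcal K^\bullet_{Nis}=0$, and Corollary \ref{C_sh} cannot be applied to it. (This is also why Section 6 of the paper introduces the quasi-finite variant $F_n^{qf}$, for which the sheaf-level cocartesian statement does hold.)

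The paper's proof instead takes the trivial-bundle case as the main result of \cite{GNP}, which is precisely the assertion that the discrepancy between $M^{fact}$ and $M^{geom}$, real at the level of sheaves, is killed Nisnevich-locally after applying $C^*$; that theorem requires a nontrivial moving-lemma type argument with $\mathbb{A}^1$-homotopies and cannot be replaced by a stalkwise lifting of correspondences. If you keep your Mayer--Vietoris reduction and substitute the citation of \cite{GNP} for your condition (a), you recover the paper's argument; as written, your step (a) is false and the proof does not close.
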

\begin{proof}
We'll use the induction on the number of open sets covering $X$ on which $L$ is trivial. If $L$ is trivial on $X$, the statement of the theorem is the main result of \cite{GNP}. Let $X_1,X_2\subset X$ be open subsets and $X_1\cup X_2=X$. If $L_1=L|_{X_1}$ and $L_2=L|_{X_2}$ are trivial, then $L_{12}=L|_{X_{12}}$ is trivial too. Consider the diagramm
$$\xymatrix{C^*\mathbb{Z}F\left(\dfrac{L_{12}}{L_{12}-X_{12}}\right)\ar[r]&C^*\mathbb{Z}F\left(\dfrac{L_1}{L_1-X_1}\right)\oplus C^*\mathbb{Z}F\left(\dfrac{L_2}{L_2-X_2}\right)\ar[r]&C^*\mathbb{Z}F\left(\dfrac{L}{L-X}\right)\\
\dfrac{C^*\mathbb{Z}F(L_{12})}{C^*\mathbb{Z}F(L_{12}-X_{12})}\ar[r]\ar[u]&\dfrac{C^*\mathbb{Z}F(L_1)}{C^*\mathbb{Z}F(L_1-X_1)}\oplus\dfrac{C^*\mathbb{Z}F(L_2)}{C^*\mathbb{Z}F(L_2-X_2)}\ar[r]\ar[u]&\dfrac{C^*\mathbb{Z}F(L)}{C^*\mathbb{Z}F(L-X)}\ar[u]\\
}$$
String are distinguished triangles and two vertical maps are isomorphisms in $D^-(NSZF_*)$.
\end{proof}

\begin{mst} Let $f\colon X'\rightarrow X$ be etale morphism, $Z\subset X$ be a closed subset and $f\colon f^{-1}(Z)\rightarrow Z$ be an isomorphism. Then

$\mathrm{1)}$ the map $f^{fact}_*\colon M_Z^{fact}*(X')\rightarrow M_Z^{fact}(X)$ is isomorphism in $D^-(NSZF_*)$,

$\mathrm{2)}$ the map $f^{geom}_*\colon M_Z^{geom}(X')\rightarrow M_Z^{geom}(X)$ is isomorphism in $D^-(NSZF_*)$.
\end{mst}

\section{Deformation to normal bundle}

\begin{mst}
Let $X$ be a smooth variey, $Z$ is its closed subset. Denote with $p\colon X\times \mathbb{A}^1\rightarrow X$ projection.Then the map $$p_*\colon M_{Z\times \mathbb{A}^1}^{geom}(X\times \mathbb{A}^1)\rightarrow M_Z^{geom}(X)$$ is isomorphism in $D^-(NSZF_*)$.
\end{mst}

\begin{mcor}
Let $p\colon E\rightarrow X$ be a vector bundle over $X$, $Z\subset X$, $S=p^{-1}(Z)$. Then the map $$p_*\colon M_{S}^{geom}(E)\rightarrow M_Z^{geom}(X)$$ is isomorphism in $D^-(NSZF_*)$.
\end{mcor}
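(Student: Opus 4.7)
The plan is to bootstrap from the preceding statement (the case $E = X \times \mathbb{A}^1$) by Zariski-local trivialization of $E$ combined with the geometric Mayer--Vietoris triangle of Statement \ref{MV}.2, exactly in the spirit of the proof of Theorem \ref{bundle_iso}. Since the claim is that a specific morphism in $D^-(NSZF_*)$ is an isomorphism and this property is Nisnevich local, it suffices to verify it after restriction to the members of a Zariski open cover $\{X_\alpha\}_{\alpha=1}^N$ of $X$ on which $E$ trivializes. I would proceed by induction on $N$.

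For the base case $N = 1$ the bundle is globally trivial, say $E \simeq X \times \mathbb{A}^n$ with $S = Z \times \mathbb{A}^n$. I would factor $p$ as
$$X \times \mathbb{A}^n \to X \times \mathbb{A}^{n-1} \to \cdots \to X \times \mathbb{A}^1 \to X,$$
each arrow being the projection off a single $\mathbb{A}^1$ factor. Applying the preceding statement with base $X \times \mathbb{A}^k$ and closed subset $Z \times \mathbb{A}^k$ at each stage shows that each arrow induces an isomorphism of geometric motives in $D^-(NSZF_*)$, and their composition is $p_*$.

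For the inductive step, write $X = X_1 \cup X_2$ with $E|_{X_2}$ trivial and $E|_{X_1}$ trivializable by $N-1$ opens. Put $X_{12} = X_1 \cap X_2$, $E_i = E|_{X_i}$, $Z_i = Z \cap X_i$, $S_i = p^{-1}(Z_i)$. The projections $p_i \colon E_i \to X_i$ induce a morphism from the geometric Mayer--Vietoris triangle of Statement \ref{MV}.2 associated to the cover $\{E_1, E_2\}$ of $E$ over $S$ to the one associated to the cover $\{X_1, X_2\}$ of $X$ over $Z$. The left and middle vertical arrows are isomorphisms by the inductive hypothesis applied to $E_{12}$, $E_1$ and $E_2$; hence the right vertical arrow $p_* \colon M_S^{geom}(E) \to M_Z^{geom}(X)$ is an isomorphism by the triangulated five lemma.

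The only point to verify carefully is the commutativity of the ladder, i.e.\ naturality of the geometric Mayer--Vietoris triangle with respect to the pushforward along $p$. This reduces to the set-theoretic identities $p^{-1}(Z) \cap E_i = S_i$ and $E_1 \cup E_2 = E$ together with the functoriality of the pushforward maps $u_{i*}$ and $v_{i*}$ on framed correspondences appearing in Lemma \ref{MV_s}. No step appears to present a genuine obstacle; the argument is a direct analogue of the one used for Theorem \ref{bundle_iso}, with the essential input relocated from \cite{GNP} to the $\mathbb{A}^1$-statement preceding this corollary.
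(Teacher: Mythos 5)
Your proof is correct and is essentially the paper's own argument spelled out in full: the paper disposes of this corollary with the single line ``this follows from the previous statement by applying \ref{MV}'', which is precisely the combination you describe --- iterate the $\mathbb{A}^1$-projection statement to handle the trivial case, then induct over a trivializing cover using the geometric Mayer--Vietoris triangle of Statement \ref{MV} and the triangulated five lemma, in parallel with the proof of Theorem \ref{bundle_iso}. No gaps.
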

\begin{proof}This follows from the previous statement by applying \ref{MV}.
\end{proof}

\begin{mst}
Let $X$ be a smooth variety and $Z$ is its closed subset. Denote with $p\colon X\times \mathbb{A}^1\rightarrow X$  the projection. Then the map $$p_*\colon M_{Z\times \mathbb{A}^1}^{fact}(X\times \mathbb{A}^1)\rightarrow M_Z^{fact}(X)$$ is isomorphism in $D^-(NSZF_*)$.
\end{mst}
\begin{proof}
This follows from the homotopical invariance of $C^*\mathbb{Z}F(X)$.
\end{proof}

\begin{mcor}
Let $p\colon E\rightarrow X$ be a vector bundle over X, $Z\subset X$, $S=p^{-1}(Z)$. Then the map $$p_*\colon M_{S}^{fact}(E)\rightarrow M_Z^{fact}(X)$$ is an isomorphism in $D^-(NSZF_*)$.
\end{mcor}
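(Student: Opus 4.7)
The plan is to mirror the argument used in the preceding geometric corollary: reduce to a trivial bundle by induction on the size of a trivializing open cover of $X$, using the Mayer--Vietoris triangle of Statement \ref{MV} (part 1) at each inductive step, and the homotopy-invariance statement $M_{Z\times\mathbb{A}^1}^{fact}(X\times\mathbb{A}^1)\simeq M_Z^{fact}(X)$ for the base case.

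For the base case, suppose $E\simeq X\times\mathbb{A}^r$ is trivial, so $S=Z\times\mathbb{A}^r$. Factor the projection as
$$X\times\mathbb{A}^r\xrightarrow{p_r} X\times\mathbb{A}^{r-1}\xrightarrow{p_{r-1}}\cdots\xrightarrow{p_1}X,$$
each $p_i$ being a trivial $\mathbb{A}^1$-bundle over $X\times\mathbb{A}^{i-1}$. Applying the previous statement (homotopy invariance of the factor-model with supports) to each factor $p_i$ with closed subset $Z\times\mathbb{A}^{i-1}$ shows that $(p_i)_*$ is an isomorphism in $D^-(NSZF_*)$, and composing the isomorphisms yields the base case.

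For the inductive step, fix an open cover $X=X_1\cup\cdots\cup X_n$ trivializing $E$, and write $X'=X_1$, $X''=X_2\cup\cdots\cup X_n$, $X'_{12}=X'\cap X''$; put $Z'=Z\cap X'$, $Z''=Z\cap X''$, $Z'_{12}=Z\cap X'_{12}$, and similarly $E',E'',E'_{12}$ and their preimages $S',S'',S'_{12}$. Note that $E$ is trivialized by fewer opens on $X'$ and on $X''$ (strictly fewer than $n$), and $E'_{12}$ inherits a trivializing cover of size at most $n-1$ as well. Statement \ref{MV} (part 1) gives two Mayer--Vietoris distinguished triangles joined by the projection $p_*$ into a morphism of triangles
$$\xymatrix@C=1em{M_{S'_{12}}^{fact}(E'_{12})\ar[r]\ar[d]_{p_*}& M_{S'}^{fact}(E')\oplus M_{S''}^{fact}(E'')\ar[r]\ar[d]_{p_*}& M_{S}^{fact}(E)\ar[d]_{p_*}\\
M_{Z'_{12}}^{fact}(X'_{12})\ar[r]& M_{Z'}^{fact}(X')\oplus M_{Z''}^{fact}(X'')\ar[r]& M_{Z}^{fact}(X).}$$
By the induction hypothesis, the left and middle vertical arrows are isomorphisms; hence the right one is too by the five-lemma in $D^-(NSZF_*)$.

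The one point that requires a word of care is the commutativity of this diagram, i.e.\ that the Mayer--Vietoris maps from Lemma \ref{MV_s} are natural with respect to the structure morphism $p$. This is immediate from the construction, since $p\colon E\to X$ restricts to the vector bundle projections $E_i\to X_i$ and $E'_{12}\to X'_{12}$, and the maps $u_{i*},v_{i*}$ induced by open inclusions are compatible with the pushforward along $p$. Once this naturality is recorded, the argument is formally identical to that of the geometric corollary.
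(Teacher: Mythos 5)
Your proposal is correct and follows essentially the same route as the paper, whose entire proof reads ``This follows from the previous statement by applying \ref{MV}'' --- i.e.\ exactly the reduction to a trivial bundle via the Mayer--Vietoris triangle of Statement \ref{MV} together with the $\mathbb{A}^1$-invariance of $M^{fact}$ with supports. You have merely written out the induction on the trivializing cover and the iteration of the $\mathbb{A}^1$-bundle step that the paper leaves implicit.
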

\begin{proof}
This follows from the previous statement by applying \ref{MV}.
\end{proof}

\begin{mst}
Let $X$ be a smooth variety, $Z\subset X$ be its closed subset and $N=N_{X/Z}$ be the normal bundle. Denote with $X_t$ deformation to normal bundle. Let $U$ and $V$ be open subsets of$X$. Then

a) $U_t\cap V_t=(U\cap V)_t$;

b) $U_t\cup V_t=(U\cup V)_t$;

c) Let
$$\xymatrix{\widetilde{X}-\widetilde{Z}\ar[d]^{e|_{\widetilde{X}-\widetilde{Z}}} \ar@{^(->}[r]& \widetilde{X}\ar[d]^{e}&\widetilde{Z}\ar@{_(->}[l]\ar@{=}[d]\\
X-Z\ar@{^(->}[r]& X&Z\ar@{_(->}[l]}$$
be an elemental Nisnevich square. Then
$$\xymatrix{\widetilde{X}_t-\widetilde{Z}\times\mathbb{A}^1\ar[d] \ar@{^(->}[r]& \widetilde{X}_t\ar[d]^{e_t}&\widetilde{Z}\times\mathbb{A}^1\ar@{_(->}[l]\ar@{=}[d]\\
X_t-Z\times \mathbb{A}^1\ar@{^(->}[r]& X_t&Z\times\mathbb{A}^1\ar@{_(->}[l]}$$
is also elemental Nisnevich square.
\end{mst}

\begin{proof}
This follows from construction of $X_t$ described in \cite{PS}.
\end{proof}

\begin{mth}\label{defor}
Let $X$ be a smooth variety, $Z\subset X$ be its smooth subvarietyand $N=N_{X/Z}$ be the normal bundle. Denote with $X_t$ deformation to normal bundle. The maps of pairs
$$(N, N-Z)\xrightarrow{i_0}(X_t,X_t-Z\times \mathbb{A}^1)\xleftarrow{i_1}(X,X-Z)$$
induce the isomorphisms

$\mathrm{1)}$ $M_Z^{geom}(N)\xrightarrow{\sim}M_{Z\times \mathbb{A}^1}^{geom}(X_t)\xleftarrow{\sim}M_Z^{geom}(X),$

$\mathrm{2)}$ $M_Z^{fact}(N)\xrightarrow{\sim}M_{Z\times \mathbb{A}^1}^{fact}(X_t)\xleftarrow{\sim}M_Z^{fact}(X).$
\end{mth}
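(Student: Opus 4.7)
The plan is to reduce, by Mayer--Vietoris on the base $X$ and by \'etale excision, to the trivial case in which $X$ itself is the total space of the normal bundle $N$. In that base case both $i_0$ and $i_1$ are sections of the projection $N \times \mathbb{A}^1 \to N$, and the theorem follows immediately from the projection corollaries stated earlier in this section. The argument proceeds in parallel for the geometric and factor versions, since the auxiliary inputs are already established in both variants.

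First I would use parts a)--c) of the preceding statement together with Statement~\ref{MV} to show that if $X = U \cup V$ is a Zariski cover, then the theorem for the pairs $(U, Z \cap U)$, $(V, Z \cap V)$ and $(U \cap V, Z \cap U \cap V)$ implies it for $(X, Z)$. Concretely, $X_t = U_t \cup V_t$ and $(U \cap V)_t = U_t \cap V_t$, so one obtains a morphism of Mayer--Vietoris triangles induced by $i_0$ and $i_1$, and a five-lemma argument in $D^{-}(NSZF_*)$ finishes the step. By induction on the size of a Zariski cover, it then suffices to verify the theorem after shrinking $X$ to an arbitrarily small Zariski neighborhood of any given point of $Z$.

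For a small enough neighborhood $U$ of $z \in Z$, smoothness of the pair $(X, Z)$ produces an \'etale morphism $\pi \colon U \to N|_{Z \cap U}$ restricting to the identity on $Z \cap U$, which is precisely the \'etale map of an elementary Nisnevich square of pairs. By part c) of the preceding statement, the deformation $\pi_t \colon U_t \to N|_{Z \cap U} \times \mathbb{A}^1$ again fits into an elementary Nisnevich square, this time with respect to $Z \times \mathbb{A}^1$. Applying the \'etale excision statement of the previous section to both squares reduces the theorem for $(X, Z)$ to the theorem for $(N|_{Z \cap U}, Z \cap U)$, that is, to the case $X = N$. In this final case $X_t = N \times \mathbb{A}^1$ and both $i_j$ are sections of the projection $p$, while $p_*$ is an isomorphism in each of the two variants by the earlier corollaries on projections from vector bundles; hence each $(i_j)_*$ is an isomorphism as well.

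I expect the principal obstacle to lie in the interplay between Mayer--Vietoris and \'etale excision at the deformation level in the second and third steps: one must verify that passing from $X$ to $X_t$ is compatible both with Zariski shrinking and with \'etale neighborhoods of $Z$. These two compatibilities are precisely parts a)--c) of the preceding statement, so the geometric content has already been extracted there and what remains is formal manipulation of distinguished triangles in $D^{-}(NSZF_*)$.
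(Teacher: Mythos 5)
Your proposal is correct and coincides with the argument the paper intends: the paper's own ``proof'' is only a citation of Theorem 1.2 of \cite{PS}, whose proof is exactly your scheme of Mayer--Vietoris induction over a Zariski cover trivializing the pair, \'etale excision via the elementary Nisnevich squares preserved by the deformation (parts a)--c) of the preceding statement), and reduction to the vector bundle case, which is the content of the lemma immediately preceding Theorem \ref{defor}. You have in effect supplied the details that the paper delegates to \cite{PS}, using precisely the auxiliary statements the paper sets up for this purpose.
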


\begin{proof}
The proof is similar to the proof of Theorem 1.2 from \cite{PS}.
\end{proof}

  \begin{mlm}
  Let $p\colon E\rightarrow Z$ be a vector bundle over smooth $Z$, $i\colon Z \hookrightarrow E$ be a zoro section. Then the maps of pairs
$$(N, N-Z)\xrightarrow{i_0}(E_t,E_t-Z\times \mathbb{A}^1)\xleftarrow{i_1}(E,E-Z),$$
where the normal bundle $N$ also coinsides with $E$, induce the isomorphisms

$\mathrm{1)}$ $M_Z^{geom}(N)\xrightarrow{\sim}M_{Z\times \mathbb{A}^1}^{geom}(E_t)\xleftarrow{\sim}M_Z^{geom}(E),$

$\mathrm{2)}$ $M_Z^{fact}(N)\xrightarrow{\sim}M_{Z\times \mathbb{A}^1}^{fact})(E_t)\xleftarrow{\sim}M_Z^{fact}(E).$
  \end{mlm}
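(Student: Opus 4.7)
The plan is to apply Theorem \ref{defor} directly. The hypotheses are satisfied: since $Z$ is smooth by assumption and $p\colon E\to Z$ is a vector bundle, the total space $E$ is smooth, and the zero section $i\colon Z\hookrightarrow E$ realizes $Z$ as a smooth closed subvariety of $E$. Hence Theorem \ref{defor} applies to the pair $(E,Z)$ and yields the isomorphisms
$$M_Z^{geom}(N_{E/Z})\xrightarrow{\sim}M_{Z\times\mathbb{A}^1}^{geom}(E_t)\xleftarrow{\sim}M_Z^{geom}(E),$$
together with the analogous chain for the factor-motive version $M^{fact}$.

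It remains only to identify the normal bundle $N_{E/Z}$ with the vector bundle $E$ itself. This is the standard canonical identification for the zero section of a vector bundle: the conormal sheaf of $i(Z)\subset E$ is canonically isomorphic as an $\mathcal{O}_Z$-module to the sheaf of sections of $E^\vee$, so the normal bundle is canonically isomorphic to $E$ as a bundle over $Z$. Substituting this identification into the two chains of isomorphisms provided by Theorem \ref{defor} gives parts (1) and (2) of the lemma.

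There is no substantive obstacle here—the lemma is a direct specialization of Theorem \ref{defor} once the identification $N_{E/Z}\cong E$ is recorded. The only point worth checking carefully, if one wanted to give a self-contained argument, would be compatibility of this identification with the maps $i_0$ and $i_1$ appearing in the deformation diagram, but this is immediate from the explicit construction of $E_t$ referenced in \cite{PS}, where in the vector-bundle case $E_t$ is simply the total space of $E\times\mathbb{A}^1\to Z\times\mathbb{A}^1$ with the scaling action identifying both endpoints with $E$.
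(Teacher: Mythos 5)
Your deduction is formally clean, but it almost certainly inverts the logical order of the argument and is therefore in danger of being circular. You obtain the lemma as a special case of Theorem \ref{defor} applied to the smooth pair $(E,Z)$ (zero section), using the canonical identification $N_{E/Z}\cong E$. The identification itself is correct and is even recorded in the statement of the lemma. The problem is that Theorem \ref{defor} is proved in this paper only by reference to Theorem 1.2 of \cite{PS}, and the standard proof of that general deformation statement (as in \cite{PS} and in the analogous arguments of \cite{GNP}) proceeds by reducing an arbitrary smooth pair $(X,Z)$, via \'etale neighborhoods, Nisnevich excision and Mayer--Vietoris, to precisely the case of the zero section of a vector bundle --- which is Lemma 2.5 of \cite{PS}, the result this lemma is modelled on. That is why the paper states the lemma with its own proof reference rather than as a corollary of Theorem \ref{defor}: the vector-bundle case is the base case of the induction, not a consequence of the general theorem. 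If your derivation were substituted into the paper, the lemma would depend on Theorem \ref{defor}, whose proof depends on the lemma.

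The intended argument is the direct one, which you in fact sketch in your closing remark and should promote to the main proof: for the zero section of a vector bundle the deformation space trivializes, $E_t\cong E\times\mathbb{A}^1$ over $Z\times\mathbb{A}^1$, with $i_0$ and $i_1$ becoming the inclusions of the fibres over $t=0$ and $t=1$ and with $N\cong E$. One then invokes the $\mathbb{A}^1$-invariance statements already established in this section, namely that $p_*\colon M_{Z\times\mathbb{A}^1}^{geom}(X\times\mathbb{A}^1)\rightarrow M_Z^{geom}(X)$ and its $fact$ counterpart are isomorphisms in $D^-(NSZF_*)$, to conclude that both $i_{0*}$ and $i_{1*}$ are isomorphisms (each is a section of the projection isomorphism). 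That argument uses only results proved before the lemma and no part of Theorem \ref{defor}. As written, your proposal has a genuine structural gap: it does not supply an independent proof of the one case that the general theorem cannot be assumed to cover.
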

 \begin{proof}
 The proof is similar to the proof of lemma 2.5 in \cite{PS}.
 \end{proof}

\begin{mth}\label{iso_geom_factor}
Let $X$ be a smooth variety and  $Z\subset X$ be its smooth subvariety. Then the map
$$M_Z^{fact}(X)\rightarrow M_Z^{geom}(X)$$
is the isomorphism in $D^-(NSZF_*)$.
\end{mth}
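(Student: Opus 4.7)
The plan is to reduce the general case to the case of a vector bundle, where the result is already known by Theorem~\ref{bundle_iso}, via the deformation to the normal bundle provided by Theorem~\ref{defor}. The natural transformation $M_Z^{fact}(-)\to M_Z^{geom}(-)$ is functorial with respect to maps of pairs, so every map appearing in the deformation picture gives a commutative square relating the two theories.

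Concretely, let $N=N_{X/Z}$ and let $X_t$ denote the deformation to the normal bundle, with maps of pairs
$$(N,N-Z)\xrightarrow{i_0}(X_t,X_t-Z\times\mathbb{A}^1)\xleftarrow{i_1}(X,X-Z).$$
Naturality of the comparison map $M^{fact}\to M^{geom}$ produces the commutative diagram
$$\xymatrix{
M_Z^{fact}(N)\ar[r]^{i_{0*}}\ar[d] & M_{Z\times\mathbb{A}^1}^{fact}(X_t)\ar[d] & M_Z^{fact}(X)\ar[l]_{i_{1*}}\ar[d]\\
M_Z^{geom}(N)\ar[r]^{i_{0*}} & M_{Z\times\mathbb{A}^1}^{geom}(X_t) & M_Z^{geom}(X)\ar[l]_{i_{1*}}
}$$
in $D^-(NSZF_*)$. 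By Theorem~\ref{defor}, all four horizontal maps are isomorphisms. On the other hand, $N\to Z$ is a vector bundle with $Z$ embedded as the zero section, so Theorem~\ref{bundle_iso} says that the leftmost vertical map is a local isomorphism in $D^-(NSZF_*)$. Chasing around the diagram forces the rightmost vertical map to be a local isomorphism as well, which is precisely the assertion of the theorem.

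The step I would expect to require the most care is verifying that the squares above really commute in $D^-(NSZF_*)$: the horizontal maps come from the deformation construction of \cite{PS}, while the vertical maps are the canonical comparison $C^*\mathbb{Z}F(-)/C^*\mathbb{Z}F(-\setminus\cdot)\to C^*\mathbb{Z}F(-/\cdot\setminus)$, and one has to check that these are compatible at the level of framed correspondences on the nose before passing to the derived category. Once this naturality is in hand, everything else is a formal two-out-of-three argument using the two isomorphism results already established.
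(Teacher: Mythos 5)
Your proposal is correct and follows essentially the same route as the paper: the paper's proof consists of exactly the same diagram built from the deformation to the normal bundle, with the horizontal maps handled by Theorem~\ref{defor} and the left vertical map by Theorem~\ref{bundle_iso}, followed by the same two-out-of-three conclusion. Your added remark about checking commutativity of the squares is a point the paper passes over in silence, so it is a reasonable thing to flag, but it does not change the argument.
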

\begin{proof}
Consider the deformation to normal bundle $X_t$. We have the diagram
$$\xymatrix{M_Z^{fact}(X)\ar[r]^{i_{1*}~~}\ar[d]&M_{Z\times \mathbb{A}^1}^{fact}(X_t)\ar[d]&M_Z^{fact}(N)\ar[l]_{~~i_{0*}}\ar[d]\\
M_Z^{geom}(X)\ar[r]^{i_{1*}~~}&M_{Z\times \mathbb{A}^1}^{geom}(X_t)&M_Z^{geom}(N)\ar[l]_{~~i_{0*}}.}$$
By the theorems \ref{defor} and  \ref{bundle_iso} we get the statement.
\end{proof}

\section{Cancelcation theorem}

\begin{mdef} Let $U$, $X$ be smooth varieties over field $k$, $S\subset X$ be a closed subset. We'll define the set$Fr_n^{qf}\left(U, \dfrac{X}{X-Z}\right)$ as a set of the following data
$$\xymatrix{
&W=(X\times \mathbb{A}^n)_Z^h\ar[ddl]_{e}\ar[rrrr]^{(g, \phi_1,\ldots,\phi_n)}&&&&X\times\mathbb{A}^n\\
&&&Z'=V(\phi_1,\ldots,\phi_n)\ar@{_(->}[ull]_{i}&&\\
U\times\mathbb{A}^n\ar[dr]_{p_U}&Z\ar[d]^{p}\ar@{_(->}[l]\ar[rrrr]\ar@{^(->}[uu]\ar@{^(->}[urr]&&&&S\times0\ar@{^(->}[uu]\\
&U&&&&\\
}$$
where $e:W=(X\times \mathbb{A}^n)_Z^h\rightarrow X\times \mathbb{A}^n$ is Henselian neighborhood of $Z$, $Z$ is  is a set-theoretical type pullback of $S\times 0$ in $W$, besides the composition $p_U\circ e\circ i\colon Z'\rightarrow U$ is quasifinite.

Denote with $F_n^{qf}\left(U, \dfrac{X}{X-S}\right)$ the subset of $Fr_n^{qf}\left(U, \dfrac{X}{X-S}\right)$ with connected supports $Z$.
\end{mdef}

\begin{mlm}\label{geom_lemma}
Let $V$ be an affine scheme, $Z\subset V$ be a connected closed subset, $can=can_{V,Z}\colon V^h_Z\rightarrow V$ be a Henselisation of $V$ in $Z$, $s\colon Z\rightarrow V^h_Z$ be a section $can$ over $Z$. Let $U$ be a regular local Henselian scheme, $q\colon V \rightarrow U$ be a smooth morphism such that $q|_Z\colon Z\rightarrow V$ is finite. Suppose that $Y\subset V^h_Z$ is closed subset contained $s(Z)$ and quasifinite over $U$. Then $can\colon Y\rightarrow V$ is closed imbedding , $can(Y)$ contains $Z$ and $can^{-1}(can(Y))=Y$.
\end{mlm}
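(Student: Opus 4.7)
The plan is to descend $Y$ from the Henselisation $V^h_Z$ to a closed subscheme of an ordinary etale neighborhood $W$ of $Z$ in $V$, establish the three conclusions at that level by a sequence of shrinkings of $W$, and then transport the result back. Since $V^h_Z$ is a cofiltered limit over etale neighborhoods $(W_\alpha,\,Z\hookrightarrow W_\alpha)$ of $Z$ in $V$, and $Y$ is of finite presentation over $V^h_Z$ (being of finite type over $U$), standard limit arguments produce some $W$ and a closed $Y_W\subset W$, containing the lift $s_W(Z)$, with $Y=Y_W\times_W V^h_Z$. All remaining moves will be shrinkings $W\mapsto W\setminus B$ along a closed $B\subset W$ disjoint from $s_W(Z)$; under these $V^h_Z$ is unchanged, and any part of $Y_W$ that ends up disjoint from $s_W(Z)$ base-changes to the empty scheme in $V^h_Z$, since in $\mathcal{O}(V^h_Z)$ the ideal cutting out $s(Z)$ lies in the Jacobson radical and any ideal coprime to it becomes the unit.

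First I would shrink to make $Y_W$ finite over $U$: because $U$ is Henselian local and $s_W(Z)\subset Y_W$ is finite, connected, with closed point above $u_0$, the standard decomposition of a separated quasifinite scheme over a Henselian local base gives $Y_W=Y_W^{\mathrm{fin}}\sqcup Y_W^{\mathrm{other}}$ with $s_W(Z)\subset Y_W^{\mathrm{fin}}$, and I discard $Y_W^{\mathrm{other}}$. Next I decompose the etale cover $can_W^{-1}(Z)\to Z$ of connected $Z$ and shrink to $can_W^{-1}(Z)=s_W(Z)$; and I also discard any connected component of $Y_W$ disjoint from $s_W(Z)$, so that $Y_W$ becomes connected.

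The central step is showing $can_W|_{Y_W}\colon Y_W\to V$ is a closed immersion. Finiteness is automatic ($Y_W$ finite over $U$, $V$ separated over $U$). For the monomorphism property I use the clopen decomposition $W\times_V W=\Delta_W(W)\sqcup W''$ coming from the etaleness of $can_W$, and consider $D:=(Y_W\times_V Y_W)\cap W''$. A point $(y_1,y_2)\in D$ with $y_1\in s_W(Z)$ would force $y_2\in can_W^{-1}(Z)\cap Y_W=s_W(Z)$ and then, by uniqueness of the section, $y_1=y_2\in\Delta_W$, contradicting $(y_1,y_2)\in W''$. The projections of $D$ to $W$ are therefore closed subsets of $W$ disjoint from $s_W(Z)$, so I can shrink $W$ to kill them, achieving $Y_W\times_V Y_W\subset\Delta_W(W)$; a finite monomorphism being a closed immersion, this proves the claim, with image $T:=can_W(Y_W)\subset V$ closed and containing $Z$. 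A final shrinking removes the extra clopen components of $can_W^{-1}(T)\to T$ so that $can_W^{-1}(T)=Y_W$ in $W$.

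Pulling back to $V^h_Z$ yields
$$can^{-1}(T)=T\times_V V^h_Z=can_W^{-1}(T)\times_W V^h_Z=Y_W\times_W V^h_Z=Y,$$
which gives both $can^{-1}(can(Y))=Y$ and $can(Y)=T\supset Z$. After the connectedness reduction, $T$ is local Henselian with closed point inside $Z$, so $(T,Z)$ is a Henselian pair, $T\times_V V^h_Z=T^h_Z=T$, and $Y$ is identified with $T\hookrightarrow V$, giving the closed-immersion claim. I expect the main obstacle to be the diagonal analysis in the central step; everything else is careful bookkeeping of legal shrinkings of $W$ and a formal use of the Henselian-pair property of the Henselisation.
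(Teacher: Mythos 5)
The paper offers no argument of its own here --- its entire proof reads ``This lemma proved in \cite{GNP}'' --- so the only thing to compare your proposal against is that external reference. Your argument is essentially correct and takes a genuinely different route. In \cite{GNP} the statement is handled directly at the level of the Henselisation: the ideal of $s(Z)$ lies in the Jacobson radical of $\mathcal{O}(V^h_Z)$, so every closed point of the closed subscheme $Y$ lies in $s(Z)$ and hence over the closed point of $U$; the decomposition of a quasifinite separated scheme over a Henselian local base then forces $Y$ to be finite over $U$, so $T=can(Y)$ is closed, finite over $U$, contains $Z$, and has all of its closed points in $Z$; finally $(T,Z)$ is a Henselian pair, $can^{-1}(T)=T^h_Z=T$, and all three conclusions drop out at once. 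You instead descend $Y$ to a closed $Y_W$ in an honest \'etale neighbourhood $W$, normalise by a chain of legal shrinkings, prove the closed-immersion statement for $Y_W\to V$ by playing $Y_W\times_V Y_W$ against the clopen diagonal, and only at the very end use the Henselian-pair property of $(T,Z)$ to return to $V^h_Z$. Your route is more hands-on and makes the finiteness mechanisms visible, at the price of length and of several spreading-out facts that must be invoked explicitly; the limit-level argument is shorter because the Jacobson-radical observation does for free what your shrinkings do by hand.

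Three points in your write-up need tightening, all repairable by standard facts. First, the clopen decomposition $Y_W=Y_W^{\mathrm{fin}}\sqcup Y_W^{\mathrm{other}}$ requires $Y_W$ itself (not just $Y$) to be quasifinite over $U$; this is a genuine spreading-out statement about limits of quasifinite morphisms and does not follow formally from $Y=Y_W\times_W V^h_Z$, so cite it or first prove $Y$ finite over $U$ at the limit level and descend that. Second, the step ``by uniqueness of the section, $y_1=y_2\in\Delta_W$'' conflates a point of $W\times_V W$ with its pair of projections; the correct justification is that after arranging $can_W^{-1}(Z)=s_W(Z)\cong Z$ the fibre of $W\times_V W$ over any point of $Z$ is a single point lying in the diagonal, so $W''$ simply has no points over $Z$. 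Third, $W\setminus B$ need not be affine, so to stay inside the system computing $V^h_Z$ you should replace it by a principal open $D(f)$ with $f\in I_B$ and $f\equiv 1$ on $s_W(Z)$, which exists because the two closed sets are disjoint in the affine $W$.
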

\begin{proof}
This lemma proved in \cite{GNP}.
\end{proof}

\begin{mlm}\label{lemma_isom_gens}
Let $X$ be a smooth affine variety over field $k$, $Z\subset Y\subset X$ be closed subsets, 
$g\colon X^h_Z\rightarrow X$ be a henselization of $X$ in $Z$. Suppose that there exist the inclusion $j\colon Y\hookrightarrow X^h_Z$ such that the diagram
$$
\xymatrix{X^h_Z\ar[d]_{g}&\\
X&Y\ar@{_(->}[l]^{i}\ar@{_(->}[lu]_{j}\\}
$$
conutes and $g^{-1}(Y)=j(Y)$. Then the canonical map $X_Y^h\rightarrow X_Z^h$ is the isomorphism of schemes.
\end{mlm}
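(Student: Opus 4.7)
The plan is to construct mutually inverse maps $\phi\colon X_Y^h \to X_Z^h$ and $\psi\colon X_Z^h \to X_Y^h$ by matching up étale neighborhoods of $Z$ and of $Y$ in $X$, using the embedding $j$ together with the universal properties of the two Henselizations.

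First I would define $\psi$ by restriction: given any étale neighborhood $(W, \tau_Y)$ of $Y$, the pair $(W, \tau_Y|_Z)$ is an étale neighborhood of $Z$, so the universal property of $X_Z^h$ yields a canonical map $X_Z^h \to W$, and these assemble into $\psi$ in the limit. Then I would define $\phi$ by extension along $j$: given an étale neighborhood $(U, \sigma_Z)$ of $Z$, let $\tilde\sigma_Z\colon X_Z^h \to U$ be the canonical map over $X$ extending $\sigma_Z$; then $\sigma_Y := \tilde\sigma_Z \circ j\colon Y \to U$ is a section of $U \to X$ over $Y$ (using $g \circ j = i$), and the universal property of $X_Y^h$ supplies a map $X_Y^h \to U$ for each such pair, assembling into $\phi$.

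The essential step is checking that the two composites are identities. The composite $\phi \circ \psi = \mathrm{id}_{X_Z^h}$ is immediate at the level of neighborhoods: $(U, \sigma_Z)$ is sent by $\phi$ to $(U, \tilde\sigma_Z \circ j)$ and then by $\psi$ to $(U, \tilde\sigma_Z \circ j|_Z) = (U, \sigma_Z)$, using that $j|_Z$ is the canonical lift of $Z \hookrightarrow X$ through $g$. For $\psi \circ \phi = \mathrm{id}_{X_Y^h}$, one must show that for every étale neighborhood $(W, \tau_Y)$ of $Y$ the two sections $\tau_Y$ and $\tilde{\tau_Y|_Z} \circ j \colon Y \to W$ coincide. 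They are two sections of the étale map $W \times_X Y \to Y$ agreeing on $Z$, so their equalizer is open and closed in $Y$; to force it to equal all of $Y$ I would invoke the hypothesis $g^{-1}(Y) = j(Y)$ together with Lemma~\ref{geom_lemma}. Base-changing to $X_Z^h$, the clean-preimage condition identifies $W \times_X Y$ inside the Henselization with its pullback along $j(Y)$ with no extra components, and Henselian uniqueness of the extension over $Z$ then propagates to force equality on all of $j(Y) \cong Y$. This rigidity check, converting the set-theoretic hypothesis into scheme-theoretic uniqueness of sections, is the main obstacle; the remainder is routine manipulation with universal properties.
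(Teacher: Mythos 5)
The paper states Lemma~\ref{lemma_isom_gens} without any proof (the lemma is immediately followed by a remark), so there is no argument of the author's to compare yours against; I can only judge the proposal on its own terms. Your architecture is sound: $\psi$ (restrict a section over $Y$ to $Z$) and $\phi$ (transport a section over $Z$ to one over $Y$ via $\sigma_Y=\tilde\sigma_Z\circ j$) are both well defined, since compatibility with morphisms of neighborhoods follows from the cone property of the projections $X^h_Z\to U$; and $\phi\circ\psi=\mathrm{id}$ does reduce to $j|_Z=s_Z$, which follows from $g^{-1}(Y)=j(Y)$ (it gives $g^{-1}(Z)=j(Z)=s_Z(Z)$ as sets, and two sections of a separated pro-\'etale map with the same image over the reduced $Z$ coincide).

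The gap is in the step you yourself flag as the main obstacle, and your proposed fix does not close it. Lemma~\ref{geom_lemma} carries hypotheses (a regular local Henselian base $U$, $Z$ finite and $Y$ quasifinite over it) that are absent from Lemma~\ref{lemma_isom_gens}, so it cannot be invoked here; and ``Henselian uniqueness over $Z$ propagates'' is precisely what fails on any connected component of $Y$ disjoint from $Z$ --- Henselianness of the pair $(X^h_Z,Z)$ rigidifies sections only along $Z$. What actually saves the argument is an elementary consequence of the mere existence of $j$: the image of $g\colon X^h_Z\to X$ is contained in $\bigcap V$ over all opens $V\supseteq Z$ (each such $V$ is an \'etale neighborhood of $Z$), i.e.\ in the set of points $x$ with $\overline{\{x\}}\cap Z\neq\emptyset$; applying this to the generic points of $Y$, which lift along $j$, shows every irreducible --- hence every connected --- component of $Y$ meets $Z$. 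Since $Y$ is Noetherian, its finitely many connected components are open and closed, so the open-and-closed equalizer of $\tau_Y$ and $\widetilde{\tau_Y|_Z}\circ j$, containing $Z$, meets and therefore contains every component, giving $\psi\circ\phi=\mathrm{id}$. With that substitution for your rigidity step the proof is complete; as written, the decisive point is asserted rather than proved.
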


\begin{mrem}
The canonical map $p\colon F_n(U,X)\rightarrow F_n\left(U, \dfrac{X}{X-S}\right)$ induces the map $p\colon F_n(U,X)\rightarrow F_n^{qf}\left(U, \dfrac{X}{X-S}\right)$.
\end{mrem}

\begin{mth}
Let $U$ be a Henselian scheme, $X$ be a smooth variety, $S\subset X$ be a closed subset. Then the next square is cocartesian
$$\xymatrix{F_n(U,X-S)\ar@{^(->}[r]^i\ar[d]&F_n(U,X)\ar[d]^p\\
\ast\ar@{^(->}[r] &F_n^{qf}\left(U, \dfrac{X}{X-S}\right).}$$
\end{mth}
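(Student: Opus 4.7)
The plan is to verify that the square is a pushout in the category of pointed sets by showing three things: (a) $p\circ i$ is constant with value $*$, so the universal property yields an induced map $\bar p\colon F_n(U,X)\sqcup_{F_n(U,X-S)}*\to F_n^{qf}(U,X/(X-S))$; (b) the preimage $p^{-1}(*)$ coincides with the image of $i$; and (c) $\bar p$ is a bijection on the complement of the basepoint, i.e.\ $p$ is injective on $F_n(U,X)\setminus F_n(U,X-S)$ and surjects onto $F_n^{qf}(U,X/(X-S))\setminus\{*\}$.

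Claims (a) and (b) follow directly from the description of $p$. Indeed, an element $\alpha=(Z_0,W,\phi,g)\in F_n(U,X)$ maps to $*$ exactly when its associated support $Z_0\cap g^{-1}(S)$ is empty, i.e.\ when $g(Z_0)\subseteq X-S$. In that case, since $Z_0$ is closed in $W$ and $g^{-1}(X-S)$ is open, restricting to $W':=W\cap g^{-1}(X-S)$ produces a representative of $\alpha$ in $F_n(U,X)$ which lies in the image of $i\colon F_n(U,X-S)\hookrightarrow F_n(U,X)$. Conversely, any element of $F_n(U,X-S)$ plainly maps to $*$.

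For (c) I describe a canonical reconstruction which, applied to a non-basepoint datum $(W,\phi,g,Z,Z')\in F_n^{qf}(U,X/(X-S))$ with $W=(U\times\mathbb{A}^n)_Z^h$, $Z'=V(\phi)$, and $Z=Z'\cap g^{-1}(S)$ connected and nonempty, produces a preimage in $F_n(U,X)$; the canonicity of the construction gives injectivity and surjectivity simultaneously. The construction proceeds in four steps. First, Lemma \ref{geom_lemma} with $V=U\times\mathbb{A}^n$ (affine because $U$ is the spectrum of a local Henselian ring) and $Y=Z'$ identifies $Z'$ with a closed subscheme of $U\times\mathbb{A}^n$ satisfying $\mathrm{can}^{-1}(\mathrm{can}(Z'))=Z'$. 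Second, since $U$ is Henselian local and $Z'/U$ is quasifinite, the Henselian form of Zariski's main theorem yields a canonical clopen decomposition $Z'=Z'_{\mathrm{fin}}\sqcup Z'_\infty$, where $Z'_{\mathrm{fin}}/U$ is finite and $Z'_\infty$ has no closed point lying over the closed point of $U$; the finiteness of $Z/U$ forces $Z\subseteq Z'_{\mathrm{fin}}$, and by connectedness $Z$ lies in a unique connected component $Z_0$ of $Z'_{\mathrm{fin}}$. Third, Lemma \ref{lemma_isom_gens} applied to $Z\subseteq Z_0\subseteq U\times\mathbb{A}^n$ (with $\mathrm{can}^{-1}(Z_0)=Z_0$ because $Z_0$ is clopen in $Z'$) gives an isomorphism $(U\times\mathbb{A}^n)_{Z_0}^h\cong W$. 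Fourth, descend to an étale neighborhood $\widetilde{W}\to U\times\mathbb{A}^n$ of $Z_0$ on which $\phi$ and $g$ are defined and $V(\phi)\cap\widetilde{W}=Z_0$; this is possible thanks to the clopen separation of $Z_0$ from $Z'\setminus Z_0$ visible in $W$. The resulting tuple $(Z_0,\widetilde{W},\phi,g)$ lies in $F_n(U,X)$, and tracing definitions shows that $p(Z_0,\widetilde{W},\phi,g)$ has support $Z_0\cap g^{-1}(S)=Z$ and recovers the original $(W,\phi,g,Z,Z')$.

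The principal obstacle is the fourth step: converting the pro-étale data living on the Henselization $W$ into honest data on a finite-level étale neighborhood $\widetilde{W}$, while simultaneously arranging $V(\phi)\cap\widetilde{W}=Z_0$. This rests on the Henselian decomposition $Z'=Z'_{\mathrm{fin}}\sqcup Z'_\infty$ (which would fail without the Henselian hypothesis on $U$) together with cofinality of the system of étale neighborhoods of $Z_0$ inside that of $Z$ after the connected component $Z_0$ has been singled out.
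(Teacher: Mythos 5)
The paper states this theorem with no proof at all, so there is no in-paper argument to compare yours against; judged on its own, your outline is correct and is evidently the intended one, since it runs precisely through Lemmas \ref{geom_lemma} and \ref{lemma_isom_gens}, which the paper places immediately before the theorem for this purpose, and it mirrors the argument for the corresponding statement in \cite{GNP}. The decomposition of the quasi-finite $Z'=V(\phi)$ over the Henselian local base into its finite part and the part with empty special fibre, the selection of the connected component $Z_0\supseteq Z$, the identification $(U\times\mathbb{A}^n)^h_{Z_0}\cong(U\times\mathbb{A}^n)^h_Z$, and the descent of $(\phi,g)$ to a finite-level \'etale neighbourhood with $V(\phi)=Z_0$ are exactly the right steps, and the injectivity-via-canonical-reconstruction argument is sound provided you also check (as you implicitly do) that the construction applied to $p(\alpha)$ returns $\alpha$ up to refinement of \'etale neighbourhoods. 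The only caveats are inherited from the paper itself rather than introduced by you: Lemma \ref{geom_lemma} assumes $U$ regular and Lemma \ref{lemma_isom_gens} is stated for a smooth affine variety, whereas here they are applied with $U$ a Henselian local scheme, so strictly one should invoke the versions of these lemmas for (essentially smooth) Henselian local bases as in \cite{GNP}.
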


\begin{mcor}
Let $X$ be a smooth variety over field $k$, $S\subset X$ be closed subset. Then there exist the following  exact sequence
$$0\rightarrow \mathbb{Z}F(X-S)\rightarrow\mathbb{Z}F(X)\rightarrow \mathbb{Z}F^{qf}\left(\dfrac{X}{X-S}\right)\rightarrow 0.$$
\end{mcor}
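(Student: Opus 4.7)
The plan is to deduce the corollary from the preceding theorem by linearizing the pushout of pointed sets and checking the resulting sequence stalkwise. Since a sequence of Nisnevich sheaves is exact iff it is exact after evaluation at every Henselian local scheme, it suffices to verify exactness of
$$0\rightarrow \mathbb{Z}F_n(U,X-S)\xrightarrow{i_*}\mathbb{Z}F_n(U,X)\xrightarrow{p_*}\mathbb{Z}F_n^{qf}\left(U,\dfrac{X}{X-S}\right)\rightarrow 0$$
for every Henselian $U$ and every level $n$; summing (or stabilizing) over $n$ then yields the stated sequence for $\mathbb{Z}F$.

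The theorem supplies a cocartesian square of pointed sets whose lower-left corner is the basepoint. Such a pushout identifies $F_n^{qf}(U,X/(X-S))$ with the quotient of the pointed set $F_n(U,X)$ that collapses the entire image of $F_n(U,X-S)$ onto the basepoint. Applying the free-abelian-group functor (which sends a pointed-set pushout with basepoint corner to a cokernel, the basepoint going to $0$), this identification becomes
$$\mathbb{Z}F_n^{qf}\left(U,\dfrac{X}{X-S}\right)=\mathrm{coker}\bigl(i_*\colon\mathbb{Z}F_n(U,X-S)\to\mathbb{Z}F_n(U,X)\bigr),$$
which gives exactness in the middle and on the right.

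For left exactness I would appeal to the earlier lemma stating that for Henselian $U$ and an open immersion $X-S\hookrightarrow X$, the map $F_n(U,X-S)\to F_n(U,X)$ is injective. Because $F_n$ is a free basis of $\mathbb{Z}F_n$, the induced linear map $\mathbb{Z}F_n(U,X-S)\to\mathbb{Z}F_n(U,X)$ is injective as well, and left exactness of the sequence follows.

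No substantial obstacle is anticipated: all the geometric content is packed into the theorem, and the corollary is a formal consequence. The one point to keep an eye on is that the basepoint of the pointed set $F_n^{qf}(U,X/(X-S))$, namely the correspondence with empty support, really corresponds to $0$ in $\mathbb{Z}F_n^{qf}$; this is immediate from the disjoint-support relation built into the definition of $\mathbb{Z}F_*$, which forces the empty correspondence to act as the zero element after passing to free abelian groups.
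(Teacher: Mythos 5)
Your argument is correct and is exactly the derivation the paper intends (the paper states this corollary without proof, but the preceding cocartesian-square theorem, the combinatorial statement relating pushout squares of sets to exact sequences of free abelian groups, and the injectivity lemma for $F_n(U,X-S)\rightarrow F_n(U,X)$ over Henselian $U$ are clearly the intended ingredients, and you use all three in the expected way). Your closing remark that the empty correspondence becomes $0$ in $\mathbb{Z}F_n^{qf}$ via the disjoint-support relation is the right way to reconcile the pointed-set pushout with the cokernel description, and passing to the colimit over the stabilization maps $\Sigma$ preserves exactness, so nothing is missing.
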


\begin{mcor}
Let $X$ be a smooth variety over field $k$, $S\subset X$ be closed subset. Then the following triangle 
$$\mathbb{Z}F(X-S)\rightarrow\mathbb{Z}F(X)\rightarrow \mathbb{Z}F^{qf}\left(\dfrac{X}{X-S}\right)\rightarrow \mathbb{Z}F(X-S)[1]$$
is distinguished in $D^-(NSZF_*)$.
\end{mcor}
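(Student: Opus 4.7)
The plan is to deduce this distinguished triangle immediately from the short exact sequence provided by the preceding corollary. That corollary gives
$$0\to \mathbb{Z}F(X-S)\to \mathbb{Z}F(X)\to \mathbb{Z}F^{qf}\left(\dfrac{X}{X-S}\right)\to 0$$
as an exact sequence in the abelian category $NSZF_*$ of Nisnevich sheaves with $\mathbb{Z}F_*$-transfers. Since $D^-(NSZF_*)$ is the bounded-above derived category of this abelian category, it carries the standard triangulated structure in which every short exact sequence of objects canonically extends to a distinguished triangle: one replaces the third term by the mapping cone of the first arrow, which is quasi-isomorphic to the cokernel as soon as the first arrow is a monomorphism, and then reads off the connecting morphism to the shift in the usual way.

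Before invoking this general principle I would double-check that all three terms really are objects of $NSZF_*$. The transfer and Nisnevich sheaf properties of $\mathbb{Z}F(X-S)$ and $\mathbb{Z}F(X)$ are the content of the lemma in Section 2. For the quotient $\mathbb{Z}F^{qf}(X/(X-S))$ both structures are inherited from its construction as the cokernel, computed in $NSZF_*$, of a morphism of such sheaves; the short exactness at Henselian local stalks is exactly what the cocartesian-square theorem preceding the last corollary supplies, and the exactness of Nisnevich sheafification on this sequence is a standard consequence.

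The main, and essentially the only, obstacle is purely conceptual: one must be confident that the abelian structure on $NSZF_*$ is the one used in the formation of $D^-(NSZF_*)$, so that the passage from short exact sequences of sheaves to distinguished triangles in the derived category is automatic. Granting this identification, which is the whole point of working in the derived category of an abelian category rather than a merely additive one, nothing further needs to be proved beyond the bookkeeping already carried out in the previous corollary.
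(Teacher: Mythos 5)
Your argument is correct and coincides with the paper's (implicit) reasoning: the paper derives this triangle directly from the short exact sequence $0\to \mathbb{Z}F(X-S)\to \mathbb{Z}F(X)\to \mathbb{Z}F^{qf}\left(\frac{X}{X-S}\right)\to 0$ of the preceding corollary, using the standard fact that a short exact sequence in an abelian category yields a distinguished triangle in its derived category via the cone construction. Your additional bookkeeping about all three terms lying in $NSZF_*$ is sound and matches the role of the cocartesian-square theorem in the paper.
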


\begin{mlm}\label{simple_presheave_lemma}
Let $\mathcal{F}$ be a homotopy invariat quasistable presheave with $\mathbb{Z}F_*$-transfers, $X$ be a local Henzelian scheme over field $k$. Then
$$H^i_{Nis}(X\times \mathbb{G}_m,\mathcal{F})=
\left\{
\begin{array}{ll}
\mathcal{F}(X\times\mathbb{G}_m)& \mbox{for $i=0$,}\\
\\
0& \mbox{for $i\neq 0$.}\\
\end{array}
\right.
$$
\end{mlm}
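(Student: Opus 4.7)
The identification $H^0_{Nis}(X \times \mathbb{G}_m, \mathcal{F}) = \mathcal{F}(X \times \mathbb{G}_m)$ is formal, so only the vanishing $H^i_{Nis}(X \times \mathbb{G}_m, \mathcal{F}) = 0$ for $i \geq 1$ needs to be proved. The hypotheses on $\mathcal{F}$ --- homotopy invariance, quasistability, and $\mathbb{Z}F_*$-transfers --- are precisely those controlled by Section~3: they force $\mathcal{F}$ to be strongly homotopy invariant after Nisnevich sheafification, and the cohomology presheaves $H^i_{Nis}(-, \mathcal{F})$ inherit the same structure. In particular $H^i_{Nis}(X \times \mathbb{A}^1, \mathcal{F}) \simeq H^i_{Nis}(X, \mathcal{F})$, which vanishes for $i \geq 1$ since the local Henselian $X$ has Nisnevich cohomological dimension zero.

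The plan is to exploit the Gysin (open/closed) long exact sequence for the codimension-one smooth closed embedding $X \times \{0\} \hookrightarrow X \times \mathbb{A}^1$ with open complement $X \times \mathbb{G}_m$:
$$\cdots \to H^i_{X \times \{0\}}(X \times \mathbb{A}^1, \mathcal{F}) \to H^i_{Nis}(X \times \mathbb{A}^1, \mathcal{F}) \to H^i_{Nis}(X \times \mathbb{G}_m, \mathcal{F}) \to H^{i+1}_{X \times \{0\}}(X \times \mathbb{A}^1, \mathcal{F}) \to \cdots$$
The middle term vanishes for $i \geq 1$ by the previous paragraph, yielding an isomorphism $H^i_{Nis}(X \times \mathbb{G}_m, \mathcal{F}) \simeq H^{i+1}_{X \times \{0\}}(X \times \mathbb{A}^1, \mathcal{F})$ for $i \geq 1$. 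The local cohomology on the right is then handled by the deformation-to-normal-bundle isomorphism of Theorem~\ref{defor} together with the bundle comparison of Theorem~\ref{bundle_iso}: the normal bundle of $X \times \{0\}$ in $X \times \mathbb{A}^1$ is trivial of rank one, so the local cohomology collapses to Nisnevich cohomology on $X$ with coefficients in a contracted sheaf $\mathcal{F}_{-1}$, which is again homotopy invariant, quasistable, and equipped with $\mathbb{Z}F_*$-transfers. Vanishing on the Henselian $X$ then yields the desired vanishing for all $i \geq 1$.

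The main obstacle is to make the purity isomorphism $H^{j}_{X \times \{0\}}(X \times \mathbb{A}^1, \mathcal{F}) \simeq H^{j-1}_{Nis}(X, \mathcal{F}_{-1})$ precise within the framework of $\mathbb{Z}F_*$-transfers, and to verify that the loop sheaf $\mathcal{F}_{-1}$ inherits homotopy invariance and quasistability from $\mathcal{F}$. The former step is essentially a repackaging of Theorem~\ref{defor} together with Theorem~\ref{iso_geom_factor}, which exchange the factorial and geometric models of the motive of $X/(X-Z)$; the latter is a formal consequence of the framed transfer structure combined with the $\sigma$-stabilization defining quasistability. Once these two points are in hand, the conclusion follows by a direct diagram chase in the Gysin sequence.
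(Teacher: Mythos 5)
The paper states Lemma \ref{simple_presheave_lemma} with no proof at all, so there is nothing of the author's to compare against; what follows is an assessment of your proposal on its own terms. Your overall strategy --- run the localization sequence for the closed embedding $X\times\{0\}\hookrightarrow X\times\mathbb{A}^1$ with open complement $X\times\mathbb{G}_m$, kill the terms $H^i_{Nis}(X\times\mathbb{A}^1,\mathcal{F})\cong H^i_{Nis}(X,\mathcal{F})$ for $i\geq 1$ using strict homotopy invariance of the sheafification and the fact that a Henselian local scheme is a point for the Nisnevich topology, and then identify $H^{i+1}_{X\times\{0\}}(X\times\mathbb{A}^1,\mathcal{F})$ with $H^{i}_{Nis}(X,\mathcal{F}_{-1})$ --- is the standard route to this vanishing in the Garkusha--Panin circle of papers, and the diagram chase you describe does close once all the inputs are in place.

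However, the argument has a genuine gap precisely at the step you yourself flag as ``the main obstacle'': the purity isomorphism $H^{j}_{X\times\{0\}}(X\times\mathbb{A}^1,\mathcal{F})\simeq H^{j-1}_{Nis}(X,\mathcal{F}_{-1})$ does not follow from Theorems \ref{defor} and \ref{iso_geom_factor}. Those theorems compare the objects $M^{fact}_Z(X)$ and $M^{geom}_Z(X)$, i.e.\ complexes built from $\mathbb{Z}F(-)$ of pairs; to convert them into a statement about cohomology with supports of an arbitrary strictly homotopy invariant quasistable sheaf $\mathcal{F}$ you would need a representability statement of the form $H^j_Z(X,\mathcal{F})\cong Ext^j_{NSZF_*}\bigl(\mathbb{Z}F(X/(X-Z)),\mathcal{F}\bigr)$ (a relative version of the corollary $Ext^i_{NSZF_*}(\mathbb{Z}F_*(X),\mathcal{F})=H^i_{Nis}(X,\mathcal{F})$ from Section 3), together with a proof that the contraction $\mathcal{F}_{-1}$ is again homotopy invariant and quasistable with $\mathbb{Z}F_*$-transfers. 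Neither is supplied, and neither is contained in the present paper; both must be imported from \cite{GP2}. A secondary point: the degree-zero identification $H^0_{Nis}(X\times\mathbb{G}_m,\mathcal{F})=\mathcal{F}(X\times\mathbb{G}_m)$ is not ``formal'' for a presheaf $\mathcal{F}$ --- it asserts $\mathcal{F}(X\times\mathbb{G}_m)=\mathcal{F}_{Nis}(X\times\mathbb{G}_m)$ on a scheme that is not local, which again requires the comparison theorems of \cite{GP2} for homotopy invariant quasistable framed presheaves rather than a purely formal observation.
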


\begin{mth}\label{cancelcation}
Let $X=Spec~\mathcal{O}$, where $\mathcal{O}$ is Henselian local ring over field $k$, $Y$ be a smooth variety, $S\subset Y$  be its closed subset. Then the map
$$-\boxtimes (id_{\mathbb{G}_m}-e_1)\colon \mathbb{Z}F^{qf}\left(\Delta^\bullet\times X, \dfrac{Y}{Y-S}\right)\rightarrow \mathbb{Z}F^{qf}\left(\Delta^\bullet\times X\times \mathbb{G}_m^{\wedge 1}, \dfrac{Y}{Y-S}\times\mathbb{G}_m^{\wedge 1}\right)$$
is quasiisomorphism of complexes.
\end{mth}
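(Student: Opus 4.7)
The plan is to reduce the theorem to the framed cancellation theorem for smooth varieties (the framed analogue of Voevodsky's cancellation, cf. \cite{GNP}) by means of the short exact sequence
$$0\to \mathbb{Z}F(Y-S)\to \mathbb{Z}F(Y)\to \mathbb{Z}F^{qf}\left(\dfrac{Y}{Y-S}\right)\to 0$$
obtained in the corollary to the preceding cocartesian square theorem. The idea is that the cancellation statement on the pair follows formally, via the five-lemma, from cancellation on the two smooth varieties $Y-S$ and $Y$, once one knows that the relevant short exact sequences survive smashing with $\mathbb{G}_m^{\wedge 1}$.

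First, I would evaluate the above short exact sequence at $\Delta^\bullet \times X$ (with $X = \mathrm{Spec}\,\mathcal{O}$ Henselian local); since $X$ is local, this yields a short exact sequence of complexes of abelian groups. Applying the same short exact sequence to the pair $(Y \times \mathbb{G}_m, S \times \{1\})$ and combining with the quotient by restrictions to the unit sections on both source and target (which is the combinatorial implementation of $-\wedge \mathbb{G}_m^{\wedge 1}$), I obtain an analogous short exact sequence for the smashed objects. The preservation of exactness under this quotient is guaranteed by Lemma~\ref{simple_presheave_lemma}, which provides the vanishing of the relevant higher Nisnevich cohomology over $X \times \mathbb{G}_m$.

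Second, the cancellation map $-\boxtimes(\mathrm{id}_{\mathbb{G}_m}-e_1)$ is natural in the target variety and hence induces a morphism between the two short exact sequences of complexes. The vertical maps on $\mathbb{Z}F(Y-S)$ and on $\mathbb{Z}F(Y)$ are quasi-isomorphisms by the framed cancellation theorem applied to the smooth varieties $Y-S$ and $Y$. Invoking the five-lemma on the long exact sequences of cohomology of the two rows yields that the induced map on the quotient $\mathbb{Z}F^{qf}(Y/(Y-S))$ is also a quasi-isomorphism, which is exactly the conclusion of the theorem.

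The main obstacle is the verification of the first step: that smashing both the source $X$ and the target pair with $\mathbb{G}_m^{\wedge 1}$ genuinely preserves the short exact sequence at the level of Suslin complexes. This requires careful bookkeeping of the quasifinite support condition against the pointed-space smash product, and the argument ultimately depends on Lemma~\ref{simple_presheave_lemma} to eliminate the possible higher cohomological obstructions coming from the $\mathbb{G}_m$-factors. Once this compatibility is established, the remaining diagram chase and appeal to the classical framed cancellation theorem are routine.
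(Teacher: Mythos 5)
Your proposal follows essentially the same route as the paper: both reduce the statement to the known cancellation theorem for the smooth varieties $Y-S$ and $Y$ via the short exact sequence $0\to \mathbb{Z}F(Y-S)\to\mathbb{Z}F(Y)\to \mathbb{Z}F^{qf}(Y/(Y-S))\to 0$, use Lemma~\ref{simple_presheave_lemma} (through the hypercohomology spectral sequence) to get a long exact sequence of complexes over $X\times\mathbb{G}_m^{\wedge 1}$, and conclude by the five-lemma. The only notable discrepancy is attribution: the cancellation statement for smooth varieties that you invoke is Theorem~C of \cite{AGP}, not a result of \cite{GNP}.
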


\begin{proof}
Consider two distinguished triangles in $D^-(NSZF_*)$:
$$1)~\mathbb{Z}F(\Delta^\bullet\times -, Y-S)\rightarrow \mathbb{Z}F(\Delta^\bullet\times -, Y)\rightarrow \mathbb{Z}F^{qf}(\Delta^\bullet\times -, \dfrac{Y}{Y-S}),$$
$$2)~\mathbb{Z}F(\Delta^\bullet\times -, (Y-S)\times \mathbb{G}_m^{\wedge 1})\rightarrow \mathbb{Z}F(\Delta^\bullet\times -, Y\times \mathbb{G}_m^{\wedge 1})\rightarrow \mathbb{Z}F^{qf}(\Delta^\bullet\times -, \dfrac{Y}{Y-S}\times \mathbb{G}_m^{\wedge 1}).$$

From 1) we get the following exact sequence for scheme  $X$
$$0\rightarrow\mathbb{Z}F(\Delta^\bullet\times X, Y-S)\rightarrow \mathbb{Z}F(\Delta^\bullet\times X, Y)\rightarrow \mathbb{Z}F^{qf}(\Delta^\bullet\times X, \dfrac{Y}{Y-S})\rightarrow 0$$

From 2) we get for  $X\times \mathbb{G}_m$:
\begin{align*}
0\rightarrow H^0_{Nis}(X\times \mathbb{G}_m, \mathbb{Z}F(\Delta^\bullet\times -, (Y-S&)\times \mathbb{G}_m^{\wedge 1}))\rightarrow\\ 
\rightarrow H^0_{Nis}(X\times \mathbb{G}_m, \mathbb{Z}F(\Delta^\bullet\times -, Y\times& \mathbb{G}_m^{\wedge 1}))\rightarrow \\
\rightarrow H^0_{Nis}(X\times \mathbb{G}_m, \mathbb{Z}F^{qf}(\Delta^\bullet\times - , &\dfrac{Y}{Y-S}\times \mathbb{G}_m^{\wedge 1}))\rightarrow\\
\rightarrow H^1_{Nis}(X\times \mathbb{G}_m, \mathbb{Z}F(\Delta^\bullet\times -&, (Y-S)\times \mathbb{G}_m^{\wedge 1}))\rightarrow \ldots.
\end{align*}
We'll calculate the hypercohomologies in this sequence. We restrict ourselves to calculating cohomology\break $H^i_{Nis}\left(X\times \mathbb{G}_m, \mathbb{Z}F^{qf}(\Delta^\bullet\times - , \dfrac{Y}{Y-S}\times \mathbb{G}_m^{\wedge 1})\right)$. The remaining cohomologies can be calculated in the same way.

Consider the spectral sequence

\begin{align*}
H^p_{Nis}\left(X\times\mathbb{G}_m, \left(\underline{h}^q(\mathbb{Z}F^{qf}(\Delta^\bullet\times - , \dfrac{Y}{Y-S}\times \mathbb{G}_m^{\wedge 1}))\right)_{Nis}\right)\Longrightarrow\\
\Longrightarrow H^{p+q}_{Nis}\left(X\times \mathbb{G}_m, \mathbb{Z}F^{qf}(\Delta^\bullet\times - , \dfrac{Y}{Y-S}\times \mathbb{G}_m^{\wedge 1})\right),
 \end{align*}
where $\underline{h}^q(\mathbb{Z}F^{qf}(\Delta^\bullet\times - , \dfrac{Y}{Y-S}\times \mathbb{G}_m^{\wedge 1}))$ --- are preasheave cohomologies of the complex $\mathbb{Z}F^{qf}(\Delta^\bullet\times - , \dfrac{Y}{Y-S}\times \mathbb{G}_m^{\wedge 1}))$. On the left there are the usual cohomology of sheves, on the right there are the hypercohomology of the sheaves complex.

Denote the sheaf $\mathbb{Z}F^{qf}( - , \dfrac{Y}{Y-S}\times \mathbb{G}_m^{\wedge 1})$ with $\mathcal{F}$. Note that it is quasistable and is a sheaf with $\mathbb{Z}F_*$-transfers. Then by the theorem from \cite{GP2} preasheves  $h^q(\mathcal{F}(\Delta^\bullet\times-))$ are homotopy invariant, quasistable and has $\mathbb{Z}F_*$-transfers. Then by the theorem from  \cite{GP2} sheaves $\underline{h}^q(\mathcal{F}(\Delta^\bullet\times-))$ are also homotopy invariant , quasistable and has $\mathbb{Z}F_*$-transfers. Then by lemma \ref{simple_presheave_lemma} we get
$$H^p_{Nis}\left(X\times\mathbb{G}_m, \left(\underline{h}^q(\mathbb{Z}F^{qf}(\Delta^\bullet\times - , \dfrac{Y}{Y-S}\times \mathbb{G}_m^{\wedge 1}))\right)_{Nis}\right)=$$
$$
=\left\{
\begin{array}{ll}
\underline{h}^q(\mathbb{Z}F^{qf}(\Delta^\bullet\times X\times \mathbb{G}_m , \dfrac{Y}{Y-S}\times \mathbb{G}_m^{\wedge 1}))& \mbox{for $i=0$,}\\
\\
0& \mbox{for $i\neq 0$.}\\
\end{array}
\right.
$$
That is, the spectral sequence degenerates at the first step, and exactly one nonzero group falls into the corresponding dimension of the limit. Then
$$H^{n}_{Nis}\left(X\times \mathbb{G}_m, \mathbb{Z}F^{qf}(\Delta^\bullet\times - , \dfrac{Y}{Y-S}\times \mathbb{G}_m^{\wedge 1})\right)=\underline{h}^q\left(\mathbb{Z}F^{qf}(\Delta^\bullet\times X\times \mathbb{G}_m , \dfrac{Y}{Y-S}\times \mathbb{G}_m^{\wedge 1})\right).$$
Considering the direct summands on the left and on the right we get  
$$H^{n}_{Nis}\left(X\times \mathbb{G}_m^{\wedge 1}, \mathbb{Z}F^{qf}(\Delta^\bullet\times - , \dfrac{Y}{Y-S}\times \mathbb{G}_m^{\wedge 1})\right)=\underline{h}^q\left(\mathbb{Z}F^{qf}(\Delta^\bullet\times X\times \mathbb{G}_m^{\wedge 1} , \dfrac{Y}{Y-S}\times \mathbb{G}_m^{\wedge 1})\right).$$
Similar calculation is true for sheaves $\mathbb{Z}F(\Delta^\bullet\times -, (Y-S)\times \mathbb{G}_m^{\wedge 1})$,\break  $\mathbb{Z}F(\Delta^\bullet\times -, Y\times \mathbb{G}_m^{\wedge 1})$, so threre is a long exact sequence
\begin{align*}
0\rightarrow \underline{h}^0(\mathbb{Z}F(\Delta^\bullet \times X\times \mathbb{G}_m^{\wedge 1}, (Y-&S)\times \mathbb{G}_m^{\wedge 1}))\rightarrow\\ 
\rightarrow \underline{h}^0((\mathbb{Z}F(\Delta^\bullet\times X\times& \mathbb{G}_m^{\wedge 1}, Y\times \mathbb{G}_m^{\wedge 1}))\rightarrow \\
\rightarrow \underline{h}^0(\mathbb{Z}F^{qf}(\Delta^\bullet\times &X\times \mathbb{G}_m^{\wedge 1} , \dfrac{Y}{Y-S}\times \mathbb{G}_m^{\wedge 1}))\rightarrow\\
\rightarrow \underline{h}^1(\mathbb{Z}F(\Delta^\bullet &\times X\times \mathbb{G}_m^{\wedge 1} , (Y-S)\times \mathbb{G}_m^{\wedge 1}))\rightarrow \ldots.
\end{align*}
We'll show tat the map $$-\boxtimes (id_{\mathbb{G}_m}-e_1)\colon \mathbb{Z}F^{qf}(\Delta^\bullet\times X, \dfrac{Y}{Y-S})\rightarrow \mathbb{Z}F^{qf}(\Delta^\bullet\times X\times \mathbb{G}_m^{\wedge 1}, \dfrac{Y}{Y-S}\times\mathbb{G}_m^{\wedge 1})$$ induces the isomorphisms of complexes.
The map $-\boxtimes (id_{\mathbb{G}_m}-e_1)$ induces the triangle morphism, and so induces a map of long exact sequences:
$$\xymatrix{
\ldots\ar[d]&\ldots\ar[d]\\
\underline{h}^n(\mathbb{Z}F(\Delta^\bullet \times X, (Y-S)))\ar[d]\ar[r]&\underline{h}^n(\mathbb{Z}F(\Delta^\bullet \times X\times \mathbb{G}_m^{\wedge 1}, (Y-S)\times \mathbb{G}_m^{\wedge 1}))\ar[d]\\ 
\underline{h}^{n}(\mathbb{Z}F(\Delta^\bullet\times X, Y))\ar[d]\ar[r]&\underline{h}^n(\mathbb{Z}F(\Delta^\bullet\times X\times \mathbb{G}_m^{\wedge 1}, Y\times \mathbb{G}_m^{\wedge 1}))\ar[d]\\
\underline{h}^n\left(\mathbb{Z}F^{qf}\left(\Delta^\bullet\times X, \dfrac{Y}{Y-S}\right)\right)\ar[d]\ar[r]&\underline{h}^n\left(\mathbb{Z}F^{qf}\left(\Delta^\bullet\times X\times \mathbb{G}_m^{\wedge 1} , \dfrac{Y}{Y-S}\times \mathbb{G}_m^{\wedge 1}\right)\right)\ar[d]\\
 \underline{h}^{n+1}(\mathbb{Z}F(\Delta^\bullet \times X , (Y-S)))\ar[d]\ar[r]&
  \underline{h}^{n+1}(\mathbb{Z}F(\Delta^\bullet \times X\times \mathbb{G}_m^{\wedge 1} , (Y-S)\times \mathbb{G}_m^{\wedge 1}))\ar[d]\\
  \underline{h}^{n+1}(\mathbb{Z}F(\Delta^\bullet \times X, (Y-S)))\ar[d]\ar[r]&\underline{h}^{n+1}(\mathbb{Z}F(\Delta^\bullet \times X\times \mathbb{G}_m^{\wedge 1}, (Y-S)\times \mathbb{G}_m^{\wedge 1}))\ar[d]\\ 
  \ldots&\ldots.
}
$$
Horizontal maps except maybe
$$\underline{h}^n\left(\mathbb{Z}F^{qf}\left(\Delta^\bullet\times X, \dfrac{Y}{Y-S}\right)\right)\rightarrow\underline{h}^n\left(\mathbb{Z}F^{qf}\left(\Delta^\bullet\times X\times \mathbb{G}_m^{\wedge 1} , \dfrac{Y}{Y-S}\times \mathbb{G}_m^{\wedge 1}\right)\right)$$
are isomorphisms by theorem C from \cite{AGP}. Therefore, the map of our interest is also an isomorphism.
\end{proof}


\begin{thebibliography}{99}


\bibitem{SV} Suslin A., Voevodsky V. Bloch--Kato Conjecture and Motivic Cohomology with Finite Coefficients. // The arithmetics and geometry of algebraic cycles, NATO Sci. Ser. C Math. Phys. Sci. 548, Kluwer Acad. Publ., Dordrecht, 2000, 117--189.

\bibitem{AGP} A. Ananyevskiy, G.~Garkusha, I.~Panin, Cancellation theorem for framed motives of algebraic varieties, preprint arXiv:1601.06642.

\bibitem{GP1} G.~Garkusha, I.~Panin, Framed motives of algebraic varieties (after V. Voevodsky), preprint arXiv:1409.4372v4.
\bibitem{GP2} G. Garkusha, I. Panin, Homotopy invariant presheaves with framed transfers, preprint arXiv:1504.00884.

\bibitem{GNP} G. Garkusha, A. Neshitov, I. Panin, Framed motives of relative motivic spheres, preprint arXiv:1604.02732.

\bibitem{PS} Panin I., Smirnov A. Oriented cohomologies of algebraic varieties.
\end{thebibliography}
\end{document}